\theoremstyle{plain}
\newtheorem{theorem}{Theorem}[section]
\newtheorem{lemma}[theorem]{Lemma}
\theoremstyle{definition}
\newtheorem{definition}[theorem]{Definition}
\theoremstyle{remark}
\newtheorem{remark}[theorem]{Remark}
\numberwithin{equation}{section}
\begin{document}

\title[Nonlocal parabolic equation with nonlocal boundary condition]
{Global existence of solutions of initial-boundary value problem for nonlocal
      parabolic equation with nonlocal boundary condition}

\author[A. Gladkov]{Alexander Gladkov}
\address{Alexander Gladkov \\ Department of Mechanics and Mathematics
\\ Belarusian State University \\  4  Nezavisimosti Avenue \\ 220030
Minsk, Belarus  and  Peoples' Friendship University of Russia (RUDN University) \\  6 Miklukho-Maklaya street \\  117198 Moscow,  Russian Federation}    \email{gladkoval@mail.ru }

\author[T. Kavitova]{Tatiana Kavitova}
\address{Tatiana Kavitova \\ Department of Mathematics and Information Technologies, Vitebsk State University,
33 Moskovskii pr., Vitebsk, 210038, Belarus}
\email{kavitovatv@tut.by}
 
\subjclass{35B44, 35K61}
\keywords{Nonlinear parabolic equation, nonlocal boundary condition,
blow-up}

\begin{abstract}
We prove global existence and blow-up of solutions of initial-boundary value problem
for nonlinear nonlocal parabolic equation with nonlinear nonlocal boundary condition. Obtained
results depend on the behavior of variable coefficients for large values of time.
\end{abstract}

\maketitle

\section{Introduction}\label{in}
We consider nonlinear nonlocal parabolic equation
        \begin{equation}\label{v:u}
            u_t=\Delta u+a(x,t)u^r\int_\Omega{u^p(y,t)}\,dy-b(x,t)u^q,\;x\in\Omega,\;t>0,
        \end{equation}
with nonlinear nonlocal boundary condition
        \begin{equation}\label{v:g}
        u(x,t)=\int_{\Omega}{k(x,y,t)u^l(y,t)}\,dy,\;x\in\partial\Omega,\;t>0,
        \end{equation}
and initial datum
        \begin{equation}\label{v:n}
            u(x,0)=u_{0}(x),\; x\in\Omega,
        \end{equation}
where $r,\,p,\,q,\,l$ are positive constants, $\Omega$  is a bounded domain in $\mathbb{R}^n$
for $n\geq1$ with smooth boundary $\partial\Omega.$

Throughout this paper we suppose that $a(x,t),\;b(x,t),\;k(x,y,t)$ and $u_0(x)$ satisfy the following conditions:
        \begin{equation*}
        a(x,t),\;b(x,t)\in C^\alpha_{loc}(\overline{\Omega}\times[0,\infty)),\;0<\alpha<1,\;a(x,t)\geq0,\;b(x,t)\geq0;
        \end{equation*}
        \begin{equation*}
            k(x,y,t)\in C(\partial\Omega\times\overline{\Omega}\times[0,\infty)),\;k(x,y,t)\geq0;
        \end{equation*}
        \begin{equation*}
            u_0(x)\in C(\overline{\Omega}),\;u_0(x)\geq0,\;x\in\overline\Omega,\;u_0(x)=\int_{\Omega}{k(x,
            y,0)u_0^l(y)}\,dy,\;x\in\partial\Omega.
        \end{equation*}

For global existence and blow-up of solutions for parabolic equations
with nonlocal boundary conditions we refer to \cite{A,CL,D}, \cite{F}--\cite{GK2}, \cite{GK,KT,K,MV,P,WMX1} and the references therein.
Initial-boundary value problems  for nonlocal parabolic equations
with nonlocal boundary conditions were considered in many papers also
(see, for example, \cite{CY,FZ1,FZ3,FZY,LL,L,WMX2}). In particular, blow-up problem  for nonlocal parabolic equations
with  boundary condition (\ref{v:g}) was investigated in \cite{CYZ,FZ2,LMM,LM,LMA,MLZ,ZT,ZY}. So, for example, the authors of \cite{CYZ}  studied (\ref{v:u})--(\ref{v:n}) with $b(x,t) \equiv 0,\,$  $a(x,t) \equiv a(x)\,$ and  $k(x,y,t) \equiv
k(x,y),$ and problem (\ref{v:u})--(\ref{v:n})
with  $r=0, \,$ $a(x,t) \equiv 1,\,$ $b(x,t) \equiv b>0\,$ and $k(x,y,t)
\equiv k(x,y)$ was considered in \cite{MLZ}.  The authors of \cite{GG1} studied
(\ref{v:u})--(\ref{v:n}) with  $a(x,t) \equiv 0.$

The existence of classical local solutions and comparison principle for (\ref{v:u})--(\ref{v:n}) were proved in \cite{GK3} and \cite{GK4}.

In this paper we prove global existence and blow-up of solutions of~(\ref{v:u})--(\ref{v:n}).
Obtained results depend on the behavior of variable coefficients
 $a(x,t) ,\,$ $b(x,t)\,$  and  $k(x,y,t)$ as $t \to \infty.$

This paper is organized as follows. Global existence of solutions
for any initial data and blow-up in finite time of solutions for
large initial data are proved in section 2. In section 3 we
present finite time blow-up of all nontrivial solutions as well as
the existence of global solutions for small initial data.

\section{Global existence and blow-up of solutions}\label{gl}
Let $Q_T=\Omega\times(0,T),\;S_T=\partial\Omega\times(0,T),$
$\Gamma_T=S_T\cup\overline\Omega\times\{0\},$ $T>0.$
\begin{definition}\label{Gl:sup}
 We say that a nonnegative function $u(x,t)\in C^{2,1}(Q_T)\cap
    C(Q_T\cup\Gamma_T)$ is a supersolution
    of~(\ref{v:u})--(\ref{v:n}) in $Q_{T}$ if
    \begin{equation}\label{Gl:sup^u}
    u_t\geq\Delta u+a(x,t)u^r\int_\Omega{u^p(y,t)}\,dy-b(x,t)u^q,\;(x,t)\in Q_T,
    \end{equation}
    \begin{equation}\label{Gl:sup^g}
    u(x,t)\geq\int_{\Omega}{k(x,y,t)u^l(y,t)}\,dy,\;(x,t)\in S_T,
    \end{equation}
    \begin{equation}\label{Gl:sup^n}
    u(x,0)\geq u_{0}(x),\; x\in\Omega,
    \end{equation}
and  $u(x,t)\in C^{2,1}(Q_T)\cap
    C(Q_T\cup\Gamma_T)$ is a subsolution of~(\ref{v:u})--(\ref{v:n}) in $Q_{T}$ if
    $u\geq0$ and it satisfies~(\ref{Gl:sup^u})--(\ref{Gl:sup^n}) in the reverse
order. We say that $u(x,t)$ is a solution of~(\ref{v:u})--(\ref{v:n}) in $Q_T$ if $u(x,t)$
    is both a subsolution and a supersolution of~(\ref{v:u})--(\ref{v:n}) in $Q_{T}.$
\end{definition}

We will repeatedly use the following comparison principle (see \cite{GK3}, \cite{GK4}).
\begin{theorem}\label{Gl:theorem:comp-prins}
   Let $\underline u(x,t)$ and $\overline u(x,t)$ be a subsolution and a supersolution
   of problem~(\ref{v:u})--(\ref{v:n}) in $Q_{T},$ respectively. Suppose that $\underline u(x,t)>0$ or
   $\overline u(x,t)>0$ in $Q_T\cup\Gamma_T$ if $\min(r,p,l)<1.$ Then $\overline u(x,t)\geq
    \underline u(x,t)$  in $Q_T\cup\Gamma_T.$
\end{theorem}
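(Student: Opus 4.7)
The plan is to set $v = \overline u - \underline u$ and show $v \geq 0$ on $Q_T \cup \Gamma_T$ by linearizing the problem and invoking a (slightly modified) maximum principle. Subtracting the differential inequalities (\ref{Gl:sup^u}) for $\overline u$ and $\underline u$ and applying the mean value theorem to each of the nonlinear expressions $u^r$, $u^p$, $u^q$, and $u^l$ (evaluated at intermediate values $\xi(x,t)$ lying between $\underline u$ and $\overline u$), I obtain a linear nonlocal inequality of the schematic form
\begin{equation*}
v_t - \Delta v + c_1(x,t) v \geq \int_\Omega c_2(x,y,t)\, v(y,t)\,dy,\quad (x,t)\in Q_T,
\end{equation*}
together with the boundary relation
\begin{equation*}
v(x,t) \geq \int_\Omega c_3(x,y,t)\, v(y,t)\,dy,\quad (x,t)\in S_T,
\end{equation*}
and $v(x,0) \geq 0$. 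Here the coefficients $c_1,c_2,c_3$ depend on $a,b,k$ and on the intermediate values arising from the mean value theorem. They are bounded on $\overline{Q_T}$ because $\overline u, \underline u$ are continuous (hence bounded) on $\overline{Q_T}$, and — this is exactly where the positivity hypothesis is needed — when $\min(r,p,l) < 1$ the factors of type $\xi^{r-1}$, $\xi^{p-1}$, $\xi^{l-1}$ are bounded precisely because $\underline u > 0$ or $\overline u > 0$ keeps $\xi$ bounded away from zero.

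Next I would substitute $v = e^{\lambda t} z$ with $\lambda$ chosen so large that $\lambda + c_1 \geq 1$ on $\overline{Q_T}$, so that $z$ satisfies an analogous inequality with a nonnegative zero-order coefficient. Arguing by contradiction, suppose that $z$ attains a negative value somewhere in $\overline{Q_T}$, and set $t^\ast = \inf\{t : \min_{\overline\Omega} z(\cdot,t) < 0\}$. By continuity $z(\cdot,t^\ast) \geq 0$ on $\overline\Omega$. On a short strip $\Omega\times(t^\ast, t^\ast+\delta)$ with $\delta$ small, let $M(t) = -\min_{\overline\Omega} z(\cdot,t) \geq 0$. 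The nonlocal integrals can then be estimated by $|\Omega|\,\|c_2\|_\infty M(t)$ in the interior and $|\Omega|\,\|c_3\|_\infty M(t)$ on the boundary, so the linear inequality for $z$ becomes a standard parabolic inequality with boundary data bounded below by $-C_0 \delta\,\sup_{[t^\ast,t^\ast+\delta]} M$ for a constant $C_0$ depending only on the coefficients. The classical maximum principle applied on this strip yields $M(t) \leq C_1 \delta\, \sup_{[t^\ast,t^\ast+\delta]} M$, and choosing $\delta$ so that $C_1\delta < 1$ forces $M \equiv 0$ on $(t^\ast, t^\ast+\delta)$, contradicting the definition of $t^\ast$. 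Iterating this fixed step in $t$ covers $[0,T]$ and proves $v \geq 0$ throughout $Q_T \cup \Gamma_T$.

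The main obstacle is precisely the nonlocal boundary condition: classical comparison arguments rely on a pointwise sign on $\partial\Omega$, but here the boundary values of $v$ at $x$ are coupled to interior values of $v$ at every $y \in \Omega$ through the kernel $k(x,y,t)$. That is why the argument cannot reduce to a single application of the maximum principle but must proceed on a short time strip in which the integral contributions act as a small perturbation, and then be iterated. A secondary subtlety is the careful choice of intermediate points in the mean value theorem when $\min(r,p,l)<1$: this is exactly the reason for the positivity hypothesis, which I expect to enter the estimates only through the uniform boundedness of the linearized coefficients $c_1,c_2,c_3$.
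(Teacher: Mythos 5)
First, a point of reference: the paper does not prove Theorem~\ref{Gl:theorem:comp-prins} itself --- it imports it from \cite{GK3} and \cite{GK4} --- so your proposal can only be judged on its own merits. The overall strategy (linearize the differences $\overline u^{\,r}-\underline u^{\,r}$ etc.\ by the mean value theorem, pass to $v=e^{\lambda t}z$, and run a maximum-principle argument iterated over short time strips) is indeed the standard route for comparison principles with nonlocal boundary data, but as written it has one genuine gap at exactly the point you yourself single out as the main obstacle.

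The gap is the boundary estimate on the strip $\Omega\times(t^\ast,t^\ast+\delta)$. From the linearized boundary relation you only get $z(x,t)\ge -\sup_x\int_\Omega c_3(x,y,t)\,dy\cdot\sup_{[t^\ast,t^\ast+\delta]}M$, and there is no factor of $\delta$ in this bound; your assertion that the boundary data are bounded below by $-C_0\delta\sup M$ does not follow from the estimate $|\Omega|\,\|c_3\|_\infty M(t)$ that precedes it. Running the classical maximum principle with the honest bound gives $\sup M\le\bigl(\|c_3\|_{L^1_y}+C\delta\bigr)\sup M$, which is a contradiction only when $\|c_3\|_{L^1_y}<1$ --- and nothing in the hypotheses forces the kernel $k$ (hence $c_3$) to be small. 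To close the argument one must show that the \emph{interior} values $z(y,t)$ feeding into $\int_\Omega c_3(x,y,t)z(y,t)\,dy$ cannot yet be as negative as $-\sup M$ in an integrated sense for $t$ close to $t^\ast$: e.g.\ represent $z$ on the strip via the Green function of the Dirichlet heat problem, so that the boundary contribution at an interior point $y$ is weighted by the caloric measure $P(y,t-t^\ast)$ of $\partial\Omega$, and use that $\int_\Omega P(y,s)\,dy\to0$ as $s\to0$ uniformly in $t^\ast$ even though $P\equiv1$ on $\partial\Omega$. That quantitative step is the actual content of the theorem and is missing. A secondary, fixable looseness: when $\min(r,p,l)<1$ the mean-value point $\xi$ lies between $\underline u$ and $\overline u$ and is \emph{not} bounded away from zero merely because one of the two functions is positive (the other may vanish); what is true, and what you should argue, is that the difference quotient $(b^{r}-a^{r})/(b-a)$ is bounded whenever $\max(a,b)\ge m>0$, and that the positive function attains a positive minimum only on compact sets $\overline\Omega\times[0,T']$, $T'<T$ (note $Q_T\cup\Gamma_T=\overline\Omega\times[0,T)$ is not compact), so the conclusion on $[0,T]$ must be obtained by letting $T'\to T$. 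Likewise, boundedness of $c_1$ fails for $q<1$ (where no positivity is assumed), but this is harmless since $c_1\ge0$ and enters with the favorable sign.
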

To prove global existence of solutions of~(\ref{v:u})--(\ref{v:n}) we suppose that
\begin{equation}\label{Gl:b(x,t)}
b(x,t)> 0\textrm{ for }x\in\overline\Omega\textrm{ and } t \geq 0.
\end{equation}
\begin{theorem}\label{Gl:theorem1}
    Let $\max{(r+p,l)} \leq 1$ or (\ref{Gl:b(x,t)}) hold and either $l \leq 1, \, 1 < r+p < q$
    or $1<l<(q+1)/2,\, \max (r+p, 2p+1) < q.$
      Then problem~(\ref{v:u})--(\ref{v:n}) has global solutions for any initial data.
\end{theorem}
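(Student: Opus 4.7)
The plan is to apply the comparison principle (Theorem~\ref{Gl:theorem:comp-prins}): for each $u_0$ and $T>0$ I will construct a supersolution $\overline u$ of (\ref{v:u})--(\ref{v:n}) on $\overline{Q_T}$ with $\overline u(\cdot,0)\ge u_0$. Combined with local existence from \cite{GK3} and the usual continuation argument, the bound $u\le\overline u$ on $[0,T]$ prevents finite-time blow-up, and since $T$ is arbitrary the solution is global. The constants $a^*_T=\max_{\overline{Q_T}}a$, $k^*_T=\max_{S_T\times\overline\Omega}k$, and $b_{*,T}=\min_{\overline{Q_T}}b$ (positive under (\ref{Gl:b(x,t)})) are finite by continuity, and the supersolution will be built from these together with $|\Omega|$ and $\|u_0\|_\infty$.

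In the case $\max(r+p,l)\le 1$ I try the spatially constant ansatz $\overline u(x,t)=A\,e^{\lambda t}$ with $A\ge\|u_0\|_\infty$ and $\lambda$ large. The dissipative term $-bu^q$ is dropped, (\ref{Gl:sup^u}) reduces to $\lambda\ge a^*_T|\Omega|A^{r+p-1}e^{(r+p-1)\lambda t}$, valid on $[0,T]$ thanks to $r+p\le 1$, and (\ref{Gl:sup^g}) reduces to $A^{1-l}\ge k^*_T|\Omega|e^{(l-1)\lambda t}$, valid thanks to $l\le 1$ after an appropriate choice of $A$ (when $l=1$ and $k^*_T|\Omega|>1$, the ansatz is modified by a positive smooth factor $\varphi(x)$ satisfying $\varphi(x)\ge\int_\Omega k(x,y,t)\varphi(y)\,dy$ on $S_T$ for every $t\in[0,T]$, e.g.\ obtained by localizing mass near $\partial\Omega$). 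In the case $l\le 1$, $1<r+p<q$ with (\ref{Gl:b(x,t)}), a constant $\overline u\equiv N$ suffices: the three required inequalities collapse to $b_{*,T}N^{q-r-p}\ge a^*_T|\Omega|$, $N^{1-l}\ge k^*_T|\Omega|$ and $N\ge\|u_0\|_\infty$, all solvable simultaneously by $N$ large since $q>r+p$ and $l\le 1$.

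The main obstacle is the third case, $1<l<(q+1)/2$ with $\max(r+p,2p+1)<q$ and (\ref{Gl:b(x,t)}). Here $l>1$ forces any constant $\overline u\equiv N$ to satisfy $N\le(k^*_T|\Omega|)^{-1/(l-1)}$, so constants can no longer dominate arbitrary initial data. A non-constant supersolution must be built whose boundary values and interior values are jointly compatible with $\overline u\ge k^*_T\int\overline u^l\,dy$ on $\partial\Omega$, $-\Delta\overline u+b_{*,T}\overline u^q\ge a^*_T\overline u^r\int\overline u^p\,dy$ in $\Omega$, and $\overline u\ge u_0$. I would seek such a supersolution among time-independent profiles $\overline u(x,t)=\phi(x)$ by solving an auxiliary nonlocal elliptic problem; the exponent hypotheses are tailored to its solvability. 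Indeed, $r+p<q$ and $2p+1<q$ make the bulk dissipation $b_{*,T}\phi^q$ dominant over both $a^*_T|\Omega|\phi^{r+p}$ and the cross-terms generated by the nonlocal integral, while $l<(q+1)/2$ (equivalently $2l-1<q$) is the sharp threshold enabling the boundary integral $k^*_T|\Omega|\phi^l$ to be split via a Young-type inequality into a part absorbable into $\phi$ itself and a remainder controlled by $b_{*,T}\phi^q$. Verifying that the parameters in the ansatz can be chosen simultaneously to dominate an arbitrary $u_0$ is the technically most delicate step; once done, comparison closes the argument.
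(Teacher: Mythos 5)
Your constructions for $l<1$ are essentially sound, but there are two genuine gaps, one of them fatal to the argument as written. The decisive one is the third case $1<l<(q+1)/2$, $\max(r+p,2p+1)<q$: there you only announce that a time-independent profile $\phi$ solving an ``auxiliary nonlocal elliptic problem'' should exist, with the exponent hypotheses ``tailored to its solvability.'' That existence claim is precisely the content of the theorem in this regime, and nothing in your sketch produces $\phi$; moreover the heuristic you offer (splitting $k^*_T|\Omega|\phi^l$ by a Young-type inequality into a part absorbed into $\phi$ and a remainder controlled by $b_{*,T}\phi^q$) does not reflect the actual mechanism, since the boundary requirement $\phi(x)\ge\int_\Omega k(x,y,t)\phi^l(y)\,dy$ is a pointwise inequality on $\partial\Omega$ and cannot be traded against the bulk absorption. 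The paper builds the supersolution explicitly as a boundary-layer function of the distance $s$ to $\partial\Omega$, namely $v=\left[(s+\varepsilon)^{-\gamma}-\omega^{-\gamma}\right]_+^{\beta/\gamma}+A$ with $2/(q-1)<\beta<\min\left(1/p,1/(l-1)\right)$ and $0<\gamma<\beta/2$. The point is a competition of singularities: the absorption $b\,v^q\gtrsim(s+\varepsilon)^{-\beta q}$ must dominate $|\Delta v|\sim(s+\varepsilon)^{-(\beta+2)}$, forcing $\beta>2/(q-1)$; the boundary trace $v|_{\partial\Omega}\sim\varepsilon^{-\beta}$ must dominate $\int_\Omega v^l\sim\varepsilon^{-(\beta l-1)}$ as $\varepsilon\to0$, forcing $\beta<1/(l-1)$; and $\int_\Omega v^p$ must stay bounded as $\varepsilon\to 0$, forcing $\beta<1/p$. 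The window for $\beta$ is nonempty exactly when $l<(q+1)/2$ and $2p+1<q$, while $r+p<q$ lets the constant $A$ absorb the source away from the boundary and be taken $\ge\sup_\Omega u_0$. Without some such construction your third case is a restatement of the problem, not a proof.

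The second, smaller gap: in the case $l\le1$, $1<r+p<q$ you take $\overline u\equiv N$ and list $N^{1-l}\ge k^*_T|\Omega|$ among conditions ``solvable by $N$ large since \dots\ $l\le1$.'' For $l=1$ this reads $1\ge k^*_T|\Omega|$, a restriction on the kernel rather than on $N$; the defect you patched in your first case reappears here unpatched. The paper avoids it in both places with the single ansatz $\eta e^{\mu t}/(\varphi(x)+\varepsilon)$, where $\varphi$ is the first Dirichlet eigenfunction normalized so that $M\int_\Omega(\varphi(y)+\varepsilon)^{-l}\,dy\le1$: this supersolution equals $\eta e^{\mu t}/\varepsilon$ on $\partial\Omega$, while the boundary integral of its $l$-th power is at most $\eta^l e^{l\mu t}\le\eta e^{\mu t}/\varepsilon$ for $\eta\ge1$, $0<\varepsilon<1$, which handles $l=1$ and $l<1$ uniformly (and, with $\eta$ enlarged so that $b\,v^q$ dominates the source, covers the case $1<r+p<q$ as well). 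Your first two cases become correct once you import that device, or your own boundary-weighted factor, consistently.
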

\begin{proof}
   Let $T$ be any positive constant and
    \begin{equation}\label{Gl:M}
    M=\max\left(\sup\limits_{Q_T}{a(x,t)},\,\sup\limits_{\partial\Omega\times Q_T}{k(x,y,t)}\right).
    \end{equation}
     In order to prove global existence of solutions we construct a
suitable explicit supersolution of~(\ref{v:u})--(\ref{v:n}) in
$Q_T.$

   Suppose at first that $\max(r+p,l)\leq1.$ Let $\lambda_1$ be the first
eigenvalue of the following problem
    \begin{equation}\label{Bl:phi}
     \Delta\varphi (x) + \lambda\varphi (x) = 0,\; x\in\Omega, \quad
    \varphi(x)=0,\;x\in\partial\Omega,
    \end{equation}
and $\varphi(x)$ be the corresponding eigenfunction which is
chosen to satisfy that for some $ 0 <\varepsilon <1 $
    \begin{equation*}
    M\int_{\Omega}{\frac{dy}{(\varphi(y)+\varepsilon)^l}} \leq 1.
    \end{equation*}
Then it is easy to check that
    \begin{equation}\label{Gl:v}
    v(x,t)=\frac{\eta\exp{(\mu t)}}{\varphi(x)+\varepsilon}
    \end{equation}
   is a supersolution of~(\ref{v:u})--(\ref{v:n}) in $Q_T$ if
    $$
    \eta\geq\max\left(\sup_\Omega{u_0(x)}\sup_\Omega{\left(\varphi(x)+\varepsilon\right)},1\right),
    $$
    $$
    \mu\geq\lambda_1+\sup_\Omega\frac{2|\nabla\varphi(x)|^2}{\left(\varphi(x)+\varepsilon\right)^2}+
    M\sup_\Omega{(\varphi(x)+\varepsilon)^{1-r}}\int_\Omega{\frac{dy}{\left(\varphi(y)+\varepsilon\right)^p}}.
    $$
By Theorem~\ref{Gl:theorem:comp-prins} problem
(\ref{v:u})--(\ref{v:n}) has global solutions for any initial data.

 From~(\ref{Gl:b(x,t)}) we conclude that
$\underline b = \inf\limits_{Q_T}{b(x,t)} > 0.$ Let $l \leq 1,\;1< r+p <q.$
Then $v(x,t)$ in (\ref{Gl:v}) is a supersolution of (\ref{v:u})--(\ref{v:n}) in $Q_T$ if
    $$
    \eta\geq\max\left(\sup_\Omega{u_0(x)}\sup_\Omega{\left(\varphi(x)+\varepsilon\right)},
    \left(\frac{M}{\underline b}\sup_\Omega{(\varphi(x)+\varepsilon)^{q-r}}\int_\Omega{\frac{dy}
    {\left(\varphi(y)+\varepsilon\right)^p}}\right)^{\frac{1}{q-r-p}}, 1\right),
    $$
    $$
    \mu\geq\lambda_1+\sup_\Omega\frac{2|\nabla\varphi(x)|^2}{\left(\varphi(x)+\varepsilon\right)^2}.
    $$

Let $1<l<(q+1)/2,\, \max (r+p, 2p+1) < q.$ To construct a  supersolution
we use the change of variables in a
neighborhood of $\partial \Omega$ as in \cite{CPE}. Let
$\overline x\in\partial \Omega$ and $\widehat{n}
(\overline x)$ be the inner unit normal to $\partial \Omega$ at the
point $\overline x.$ Since $\partial \Omega$ is smooth it is well
known that there exists $\delta >0$ such that the mapping $\psi
:\partial \Omega \times [0,\delta] \to \mathbb{R}^n$ given by
$\psi (\overline x,s)=\overline x +s\widehat{n} (\overline x)$
defines new coordinates ($\overline x,s)$ in a neighborhood of
$\partial \Omega$ in $\overline\Omega.$ A straightforward
computation shows that, in these coordinates, $\Delta$ applied to
a function $g(\overline x,s)=g(s),$ which is independent of the
variable $\overline x,$ evaluated at a point $(\overline x,s)$ is
given by
     \begin{equation}\label{Gl:new-coord}
     \Delta g(\overline x,s)=\frac{\partial^2g}{\partial s^2}(\overline x,s)-\sum_{j=1}^{n-1}\frac{H_j(\overline x)}{1-s
        H_j (\overline x)}\frac{\partial g}{\partial s}(\overline x,s),
     \end{equation}
where $H_j (\overline x)$ for $j=1,...,n-1,$ denotes the principal
curvatures of $\partial\Omega$ at $\overline x.$ For $0\leq s\leq \delta$
and small $\delta$  we have
     \begin{equation}\label{Gl:enq4}
     \left|\sum_{j=1}^{n-1} \frac{H_j (\overline x)}{1-s H_j (\overline
        x)}\right|\leq\overline c.
     \end{equation}

Let $0<\varepsilon<\omega<\min(\delta, 1), $
$2/(q-1) <\beta< \min(1/p, 1/(l-1)),$  $0<\gamma<\beta/2,$ $A\ge\sup_\Omega u_0(x).$
For points in $Q_{\delta,T}=\partial \Omega \times [0,
\delta]\times [0,T]$ of coordinates $(\overline x,s,t)$ define
     \begin{equation}\label{Gl:function}
    v(x,t)=  v((\overline x,s),t)=\left[(s+\varepsilon)^{-\gamma}-\omega^{-\gamma}\right]_+^\frac{\beta}{\gamma}+A,
     \end{equation}
where $s_+=\max(s,0).$ For points in $\overline{Q_T}\setminus Q_{\delta,T}$
we set  $ v(x,t)= A.$ We prove that $ v(x,t)$
is a supersolution of (\ref{v:u})--(\ref{v:n}) in $Q_T.$
It is not difficult to check that
     \begin{equation}\label{Gl:enq1}
     \left|\frac{\partial v}{\partial s}\right|\leq \beta\min\left(\left[ D(s)\right]^\frac{\gamma+1}{\gamma}\left[( s+\varepsilon)^{-\gamma}-\omega^{-\gamma}\right]_+^\frac{\beta+1}{\gamma},\,(s+\varepsilon)^{-(\beta+1)}\right),
     \end{equation}
     \begin{equation}\label{Gl:enq2}
     \left|\frac{\partial^2 v}{\partial s^2}\right|\leq \beta (\beta+1)\min\left(\left[D(s)\right]^\frac{2(\gamma+1)}{\gamma}\left[( s+\varepsilon)^{-\gamma}-
     \omega^{-\gamma}\right]_+^\frac{\beta+2}{\gamma},\,( s+\varepsilon)^{-(\beta+2)}\right),
     \end{equation}
 where
     $$
     D(s)= \frac{( s+\varepsilon)^{-\gamma}}{ (s+\varepsilon)^{-\gamma}-\omega^{-\gamma}}.
     $$
Then $D^\prime(s)>0$ and for any $\overline\varepsilon>0$
     \begin{equation}\label{Gl:enq3}
     1\leq D(s)\leq 1+\overline\varepsilon, \; 0<s\leq{\overline s},
     \end{equation}
where ${\overline s} = [\overline\varepsilon/(1+\overline\varepsilon)]^{1/\gamma}\omega-\varepsilon,$
     $\varepsilon<[\overline\varepsilon/(1+\overline\varepsilon)]^{1/\gamma}\omega.$
We denote
\begin{equation}\label{L}
Lv \equiv v_t-\Delta
v-a(x,t)v^r\int_{\Omega}{v^p(y,t)}\,dy+b(x,t)v^q
    \end{equation}
    and
\begin{equation}\label{Gl:J1}
                 \overline J =\sup_{0<s<\delta}\int_{\partial\Omega}|J(\overline y,s)|\,d\overline y,
        \end{equation}
where $J(\overline y,s)$ is Jacobian of the change of variables in neighborhood of $\partial\Omega.$
We use the inequality $(a+b)^p \leq 2^p (a^p + b^p), \, a\ge 0, b\ge 0, p > 0$ to estimate the integral in (\ref{L})
 \begin{eqnarray}\label{Est}
\int_{\Omega}{v^p(y,t)}\,dy & \leq & 2^p \left( A^p |\Omega| +
\int_{0}^{\omega - \varepsilon}  \int_{\partial\Omega}
J(\overline y,s) \left[(s+\varepsilon)^{-\gamma}-\omega^{-\gamma}\right]_+^\frac{\beta p}{\gamma} \, d\overline y \, ds \right)
 \nonumber \\
 & \leq &  2^p \left( A^p |\Omega| + \frac{\overline J \omega^{1-\beta p}}{1-\beta p} \right).
    \end{eqnarray}
Here $|\Omega|$ is Lebesque measure of $\Omega.$ By (\ref{Gl:new-coord})--(\ref{L}), (\ref{Est}) we can choose
$\overline\varepsilon$ small and  $A$ large so that in $Q_{{\overline s},T}$
   $$
 	Lv \geq  \underline b \left( \left[( s+\varepsilon)^{-\gamma}-\omega^{-\gamma}\right]_+^\frac{\beta }{\gamma} + A \right)^q - \beta(\beta+1)\left[ D(s)\right]^\frac{2(\gamma+1)}{\gamma}\left[( s+\varepsilon)^{-\gamma}-\omega^{-\gamma}\right]_+^\frac{\beta+2}{\gamma}
$$
$$
 	-  \beta\overline c\left[D(s)\right]^\frac{\gamma+1}{\gamma}
 \left[( s+\varepsilon)^{-\gamma}-\omega^{-\gamma}\right]_+^\frac{\beta+1}{\gamma}
$$
$$
 	-2^p M \left(\left[( s+\varepsilon)^{-\gamma}-\omega^{-\gamma}\right]_+^\frac{\beta}{\gamma}+A\right)^r
 \left( A^p |\Omega| + \frac{\overline J \omega^{1-\beta p}}{1-\beta p} \right) \geq 0.
$$
Let $s\in[{\overline s},\delta].$ From (\ref{Gl:new-coord})--(\ref{Gl:enq2}) we have
$$
|\Delta v| \leq \beta(\beta+1)\omega^{-(\beta+2)}\left(\frac{1+\overline\varepsilon}
{\overline\varepsilon}\right)^\frac{\beta+2}{\gamma} + \beta\overline c\omega^{-(\beta+ 1)}\left(\frac{1+\overline\varepsilon}{\overline\varepsilon}\right)^\frac{\beta+1}{\gamma}
$$
and by (\ref{Est}) $Lv\geq0$ for large $A.$ Obviously, in $\overline{Q_T}\setminus Q_{\delta,T}$
$$
Lv \geq - 2^p M A^r \left( A^p |\Omega| + \frac{\overline J \omega^{1-\beta p}}{1-\beta p} \right) + \underline b A^q \geq  0
$$
for large $A.$

Now we prove the following inequality
\begin{equation}\label{Gl:boundary}
     v((\overline x,0),t)\geq\int_{\Omega}{k(x,y,t)v^l(y,t)}\,dy, \: (x,t) \in S_T
\end{equation}
for a suitable choice of $\varepsilon.$ To do this we use the change of variables in neighborhood of
$\partial\Omega.$ Estimating the integral $I$ in the right hand side of~(\ref{Gl:boundary}), we get
       $$
 	I \leq 2^{l}M \overline J \int_{0}^{\omega - \varepsilon}
 \left[(s+\varepsilon)^{-\gamma}-\omega^{-\gamma}\right]_+^\frac{\beta l}{\gamma} \, ds + 2^l M A^l|\Omega|
 \leq 2^l M \overline J C(\varepsilon) + 2^{l}M A^l |\Omega|,
$$
where
      \begin{equation*}
    C(\varepsilon)=
        \begin{cases}
            \varepsilon^{-(\beta l -1)}/(\beta l -1),\; \beta l > 1,\\
          \omega^{1 - \beta l }/(1 - \beta l ),\; \beta l < 1,\\
            - \ln \varepsilon,\; \beta l = 1.
        \end{cases}
    \end{equation*}
 On the other hand, we have
     $$v ((\overline x,0),t)=\left[\varepsilon^{-\gamma}-\omega^{-\gamma}\right]_+^\frac{\beta}{\gamma}+A.$$
     Hence, (\ref{Gl:boundary}) holds for small values of $\varepsilon$ and by Theorem~\ref{Gl:theorem:comp-prins}
     $u(x,t)\leq v(x,t)$ in $\overline{Q_T}.$
\end{proof}

To prove finite time blow-up result we need lower bound for solutions of~(\ref{v:u})--(\ref{v:n}) with large initial data.

\begin{lemma}\label{Gl:Lem2}
Let $u(x,t)$ be a solution of~(\ref{v:u})--(\ref{v:n}) in $\overline{Q_T}.$
For any
$\Omega_0 \subset \subset \Omega_1 \subset\subset \Omega$
and any positive constant $C$ there exists positive constant
$c_1$ such that if $u_0(x)\geq c_1$ in $\Omega_1,$ then
 \begin{equation}\label{110}
	u(x,t)\geq C \textrm{ in }\overline{\Omega_0}\times[0,T].
\end{equation}
\end{lemma}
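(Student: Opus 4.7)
My plan is to construct a positive subsolution of (\ref{v:u})--(\ref{v:n}) that is bounded below by $C$ on $\overline{\Omega_0}\times[0,T]$, and then apply the comparison principle, Theorem~\ref{Gl:theorem:comp-prins}. Since $\overline{\Omega_0}\subset\Omega_1\subset\subset\Omega$, choose a smooth bounded open set $B$ with $\overline{\Omega_0}\subset B$ and $\overline B\subset\Omega_1$. Let $\phi$ be the first Dirichlet eigenfunction of $-\Delta$ on $B$ with eigenvalue $\lambda$, normalized so $\max_{\overline B}\phi=1$. Then $\phi>0$ on $B$ and $\phi=0$ on $\partial B$, and by compactness of $\overline{\Omega_0}\subset B$ there is $m>0$ with $\phi\ge m$ on $\overline{\Omega_0}$. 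The subsolution ansatz is
\[
\underline u(x,t)=g(t)\,\phi(x)\ \text{for}\ x\in\overline B,\qquad \underline u(x,t)=0\ \text{for}\ x\in\overline\Omega\setminus\overline B,
\]
where $g\in C^1([0,T])$ is positive with $g(0)=c_1$.

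I would verify the subsolution properties step by step. The extension by zero across $\partial B$ is continuous (since $\phi=0$ there), and it remains a subsolution in the weak sense because the outward normal derivative $\partial_\nu\phi\le 0$ on $\partial B$ supplies the correct sign for the distributional Laplacian. At $t=0$, $\underline u\le g(0)=c_1\le u_0$ on $\overline B\subset\Omega_1$ and $\underline u=0\le u_0$ elsewhere. On $\partial\Omega$, the reverse of (\ref{Gl:sup^g}) reads $\underline u\le\int_\Omega k\,\underline u^l\,dy$, which is trivial since $\underline u=0$ on $\partial\Omega$. Inside $B$, dropping the non-negative source $a\underline u^r\int_\Omega\underline u^p\,dy$ and using $\Delta\phi=-\lambda\phi$, the subsolution inequality reduces to
\[
\bigl(g'(t)+\lambda g(t)\bigr)\phi(x)+b(x,t)g(t)^q\phi(x)^q\le 0.
\]
Writing $\bar b=\sup_{Q_T}b$ and invoking the pointwise bound $\phi^q\le\phi$ (valid for $q\ge 1$; the case $q\in(0,1)$ can be handled analogously by linearizing the absorption via $u^q\le u+1$ and using a Dirichlet heat-kernel lower bound on $B$), this is in turn implied by the scalar ODE inequality $g'+\lambda g+\bar b g^q\le 0$, which I solve with equality from $g(0)=c_1$.

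The resulting $g$ stays positive on $[0,T]$, and by taking $c_1$ large I can guarantee $g(T)\ge C/m$, whence $\underline u\ge C$ on $\overline{\Omega_0}\times[0,T]$. Theorem~\ref{Gl:theorem:comp-prins} then yields $u\ge\underline u\ge C$ on the same set. The main obstacle I foresee is the hypothesis of Theorem~\ref{Gl:theorem:comp-prins} when $\min(r,p,l)<1$: it requires $u$ or $\underline u$ to be strictly positive throughout $Q_T\cup\Gamma_T$, yet $\underline u$ vanishes outside $\overline B$. I would circumvent this by establishing strict positivity of $u$ itself from $u_0\ge c_1>0$ on $\Omega_1$, using a local propagation-of-positivity argument together with continuity of $u$ up to $\Gamma_T$.
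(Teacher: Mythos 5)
Your construction breaks down precisely where the absorption term matters, namely for $q>1$. Your subsolution must satisfy $g'+\lambda g+\overline b\,g^{q}\le 0$ (or, after linearizing via $b\,\underline u^{q}\le \overline b\,g^{q-1}\cdot g\phi$, the rate $\lambda+\overline b\,g^{q-1}$ grows with the amplitude). For $q>1$ the solution of $g'=-\overline b\,g^{q}$ obeys $g(T)\le[\overline b(q-1)T]^{-1/(q-1)}$ \emph{uniformly in} $g(0)=c_{1}$, and the linearized variant gives $g(T)=c_{1}\exp[-(\lambda+\overline b\,c_{1}^{q-1})T]\to 0$ as $c_{1}\to\infty$. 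In either case you cannot force $g(T)\ge C/m$ for arbitrary $C$ by enlarging $c_{1}$, so the subsolution never reaches the prescribed level $C$ on $[0,T]$. The paper escapes this trap by exploiting that the \emph{given} solution $u$ is bounded on $\overline{Q_{T}}$: setting $m=\max(\sup_{Q_{T}}u,\sup_{Q_{T}}b)$ one has $b\,u^{q-1}\le m^{q}=:\rho$, so $e^{\rho t}u$ (resp. $e^{mt}(u+1)$ when $q<1$) is a supersolution of the \emph{linear} heat equation on $\Omega_{1}$ with a decay rate $\rho$ fixed in advance of choosing $c_{1}$; comparing with $c_{1}y$, where $y$ solves the Dirichlet heat equation \eqref{111} on $\Omega_{1}$, then yields $u\ge c_{1}e^{-\rho t}y$ and $c_{1}=Ce^{\rho T}(\inf_{\Omega_{0}\times(0,T)}y)^{-1}$ works. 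That $c_{1}$ may depend on the solution through $\sup_{Q_{T}}u$ is permitted by the statement of the lemma; your self-contained subsolution cannot reproduce this.

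Two further points would also need repair even for $q\le 1$. First, Theorem~\ref{Gl:theorem:comp-prins} is stated for sub/supersolutions in $C^{2,1}(Q_{T})\cap C(Q_{T}\cup\Gamma_{T})$ (Definition~\ref{Gl:sup}); your extension of $g(t)\phi(x)$ by zero across $\partial B$ is only a subsolution in a weak or viscosity sense, so the cited theorem does not apply as stated. Second, your proposed fix for the positivity hypothesis when $\min(r,p,l)<1$ cannot succeed: $\Gamma_{T}$ contains the initial slice $\overline\Omega\times\{0\}$, where $u=u_{0}$ is only assumed bounded below on $\Omega_{1}$ and may vanish elsewhere, so no propagation-of-positivity argument makes $u>0$ on all of $Q_{T}\cup\Gamma_{T}$, and $\underline u$ itself vanishes outside $\overline B$. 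The paper's route avoids Theorem~\ref{Gl:theorem:comp-prins} entirely here: all comparisons are made for the local linear problem \eqref{111} on $\Omega_{1}$ with homogeneous Dirichlet data, where the classical comparison principle needs no positivity assumption.
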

\begin{proof}
Let $y(x,t)$ be a solution of the following problem
 \begin{equation}\label{111}
\begin{cases}
	y_t=\Delta y,\;x\in\Omega_1,\;0<t<T,\\
	y(x,t)=0,\;x\in\partial\Omega_1,\;0<t<T,\\
	y(x,0)=\chi(x) ,\; x\in\Omega_1,
	\end{cases}
\end{equation}
where
$\chi(x)\in C_0^\infty(\Omega_1),$ $\chi(x)=1$ in
$\Omega_0$ and $0\leq\chi(x)\leq1.$
By strong maximum principle
\begin{equation}\label{112}
	\inf\limits_{\Omega_0\times(0,T)}{y(x,t)}>0.
\end{equation}
Suppose that
$q\geq1.$
We put
$
	m=\max\left(\sup_{Q_T}{u(x,t)},\sup_{Q_T}{b(x,t)}\right)
$
and define function
$v(x,t)=\exp{(\rho 	t)}u(x,t).$
For
$\rho\geq m^q$
we have in $Q_T$
$$
	v_t-\Delta v = \exp{(\rho t)}\left(\rho u+a(x,t)u^r\int_\Omega{u^p(y,t)}\,dy-b(x,t)u^q\right)
\geq v(\rho-b(x,t)u^{q-1}) \geq 0.
$$
We assume
$u_0(x)\geq c_1\chi(x)$
in
$\Omega_1,$
where constant $c_1$ will be chosen below. Then by comparison principle for
 (\ref{111}) we get
$v(x,t)\geq c_1y(x,t)$
in
$\overline{\Omega_1}\times[0,T].$
Taking into account~(\ref{112}), we have (\ref{110}) if
$c_1=C\exp{(\rho T)}\left(\inf_{\Omega_0\times(0,T)}{y(x,t)}\right)^{-1}.$

Let $q<1.$
We set $w(x,t)= \exp{(mt)} (u(x,t)+1).$
Since
$u^q\leq u+1,$
we conclude that
$$
	w_t-\Delta w = \exp{(mt)}\left(m(u+1)+a(x,t)u^r\int_\Omega{u^p(y,t)}\,dy-b(x,t)u^q\right) \geq 0
$$
in $Q_T.$ Arguing as in previous case, we obtain
$$
	u(x,t)\geq c_1\exp{(-mt)}y(x,t)-1\textrm{ в }\overline{\Omega_1}\times[0,T].
$$
Choosing
$
	c_1=(C+1)\exp{(mT)}\left(\inf_{\Omega_0\times(0,T)}{y(x,t)}\right)^{-1},
$
we have (\ref{110}).
\end{proof}
Now we prove that problem (\ref{v:u})--(\ref{v:n}) has
 finite time blow-up solutions if either $l>\max{(1,(q+1)/2)}$ and
    \begin{equation}\label{Gl:k(x,y,t)2}
    k(x,y,t)\geq k_0>0,\;x\in\partial\Omega,\;y\in\Omega,\;0<t<t_0,
    \end{equation}
for some positive constants $k_0$ and $t_0$  or
$r+p>\max(q,1)$ and
\begin{equation}\label{Gl:a(x,t)2}
a(x,t)\geq a_0>0,\;x\in\Omega,\;0<t<t_1,
\end{equation}
for some positive constants $a_0$ and $t_1.$
\begin{theorem}
There exist finite time blow-up solutions of~(\ref{v:u})--(\ref{v:n}) if either $l>\max{(1,(q+1)/2)}$
and (\ref{Gl:k(x,y,t)2}) holds or $r+p>\max(q,1)$  and (\ref{Gl:a(x,t)2}) holds.
\end{theorem}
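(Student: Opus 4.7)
The plan is to construct explicit blow-up subsolutions and then invoke the comparison principle (Theorem~\ref{Gl:theorem:comp-prins}), with a different construction in each of the two alternatives.

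For the second alternative, where $r+p>\max(q,1)$ and $a(x,t)\ge a_0>0$ on $\Omega\times[0,t_1]$, the interior reaction dominates, so I would seek a subsolution supported away from $\partial\Omega$, which makes the boundary inequality trivial. Pick $\Omega_0\subset\subset\Omega$ and a smooth cutoff $\zeta\in C_0^\infty(\Omega)$ with $\zeta\equiv 1$ on $\Omega_0$ and $0\le\zeta\le 1$, and try
\begin{equation*}
\underline u(x,t)=\eta(T-t)^{-\alpha}\zeta(x),\qquad \alpha=\frac{1}{r+p-1}.
\end{equation*}
The choice of $\alpha$ forces the time-derivative and the reaction term $au^r\int u^p$ to scale identically in $(T-t)$. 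On $\Omega_0$ the subsolution inequality reduces, after dividing by $(T-t)^{-\alpha-1}$, to
\begin{equation*}
\alpha\eta \le a_0\eta^{r+p}\int_\Omega\zeta^p\,dy - b_{\max}\eta^q(T-t)^{(r+p-q)/(r+p-1)},
\end{equation*}
which holds for $\eta$ large and $T\le t_1$ small since $r+p>q$. The transition layer of $\zeta$ (and the $\Delta\zeta$ contribution) is then handled by checking that the reaction term still dominates the absorption and the gradient term for this choice of $\alpha$ and large $\eta$, after tuning the profile of $\zeta$ if needed. The initial condition $\underline u(x,0)\le u_0(x)$ is secured by taking $u_0$ sufficiently large on the support of $\zeta$.

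For the first alternative, where $l>\max(1,(q+1)/2)$ and $k(x,y,t)\ge k_0>0$ on $\partial\Omega\times\Omega\times[0,t_0]$, the boundary nonlinearity drives the blow-up and I would use a Kaplan-type integral method. Let $\phi_1>0$ denote the first Dirichlet eigenfunction of $-\Delta$ on $\Omega$, normalized by $\int_\Omega\phi_1\,dx=1$, with eigenvalue $\lambda_1$, and set $J(t):=\int_\Omega u(x,t)\phi_1(x)\,dx$. Green's identity, the boundary condition~(\ref{v:g}), and $\int_{\partial\Omega}(-\partial_\nu\phi_1)\,dS=\lambda_1$ yield for $t\in(0,t_0)$
\begin{equation*}
J'(t) \ge -\lambda_1 J(t) + k_0\lambda_1\int_\Omega u^l(y,t)\,dy - b_{\max}\int_\Omega \phi_1(x) u^q(x,t)\,dx,
\end{equation*}
where the nonnegative $au^r\int u^p$ contribution has been dropped. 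Jensen's inequality gives $\int_\Omega u^l\,dy\ge c J(t)^l$. For large initial data, Lemma~\ref{Gl:Lem2} supplies $u\ge 1$ on $\overline{\Omega_0}$, so $u^q\le u^{2l-1}$ there (the hypothesis $l>(q+1)/2$ is exactly $q<2l-1$); H\"older and Young interpolation then let me absorb $b_{\max}\int_\Omega\phi_1 u^q\,dx$ into $\varepsilon\int_\Omega u^l\,dy + C_\varepsilon$. One obtains $J'(t)\ge c_1 J(t)^l - c_2$, and since $l>1$ this ODE blows up in finite time whenever $J(0)$ is sufficiently large, which is arranged by taking $u_0$ large enough; $u$ inherits the blow-up.

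The main obstacle is the absorption estimate in the first alternative when $q>l$, where no direct H\"older bound controls $\int\phi_1 u^q$ by a sublinear power of $\int u^l$. The hypothesis $l>(q+1)/2$ is precisely the interpolation threshold $q<2l-1$ that, combined with the pointwise lower bound supplied by Lemma~\ref{Gl:Lem2}, allows $\int\phi_1 u^q$ to be controlled by a small multiple of the boundary-driven gain $\int u^l$ plus a lower-order constant. Carrying out this interpolation, while tracking the dependence on the initial data, is the technical heart of the argument; a routine secondary difficulty is the transition-layer subsolution check in the second alternative.
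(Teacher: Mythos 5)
Your strategy differs from the paper's in both alternatives, and in both there is a concrete gap. For the second alternative, the separated subsolution $\underline u=\eta(T-t)^{-\alpha}\zeta(x)$ with $\alpha=1/(r+p-1)$ cannot be made to work on the transition layer of $\zeta$ when $r>1$ (a case permitted by $r+p>\max(q,1)$). After dividing by $(T-t)^{-\alpha-1}$, the required inequality is
\begin{equation*}
\alpha\eta\,\zeta\;\leq\;\eta(T-t)\Delta\zeta+a_0\eta^{r+p}\zeta^{r}\int_\Omega\zeta^{p}\,dy-\sup b\cdot\eta^{q}(T-t)^{\alpha(r+p-q)}\zeta^{q},
\end{equation*}
and as $t\to T$ the Laplacian contribution vanishes, leaving $\alpha\eta\,\zeta\leq C\eta^{r+p}\zeta^{r}$, which fails at points where $\zeta$ is small and positive whenever $r>1$, no matter how large $\eta$ is or how the profile of $\zeta$ is tuned (a continuous compactly supported $\zeta$ must take arbitrarily small positive values); a similar failure occurs between $\zeta^{q}$ and $\zeta^{r}$ when $q<r$. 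The claim that the transition layer ``is then handled by checking that the reaction term still dominates'' is therefore not a proof. This is precisely why the paper splits this alternative into three cases: a Kaplan-type argument on $J(t)=e^{\lambda_1 t}\int_\Omega u\varphi\,dx$ combined with the lower bound of Lemma~\ref{Gl:Lem2} for $r\ge q>1$ and for $r>1\ge q$, and, for $r<q$, a genuinely self-similar subsolution $(T-t)^{-\gamma}\bigl(A^2-|x|^2/(T-t)\bigr)_+$ supported in a shrinking paraboloid and compared through the auxiliary Dirichlet problem~(\ref{Dir}).

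For the first alternative your computation up to $J'(t)\ge-\lambda_1 J+k_0\lambda_1\int_\Omega u^l\,dy-\sup b\cdot\int_\Omega\varphi u^q\,dx$ is correct, but the absorption step is not. The pointwise bound $u^q\le u^{2l-1}$ on $\{u\ge1\}$ (valid since $q<2l-1$) controls $\int\varphi u^q$ only by $\int u^{2l-1}$, which is a \emph{higher} power than $l$ because $l>1$; no H\"older--Young interpolation bounds $\int u^{2l-1}$ by $\varepsilon\int u^l+C_\varepsilon$ without an a priori sup bound on $u$, and Lemma~\ref{Gl:Lem2} supplies only a lower bound. The hypothesis $l>(q+1)/2$ is not an $L^q$-versus-$L^l$ interpolation threshold: in \cite{GG1} it is exactly the condition $q<2l-1$ under which the absorption term is subdominant for the self-similar boundary-layer blow-up rate $(T-t)^{-1/(2(l-1))}$ dictated by the nonlocal boundary condition. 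The paper disposes of this alternative in one line: any finite-time blow-up solution of the absorption-only problem~(\ref{Gl:ad_problem2}), whose existence is the main result of \cite{GG1}, is a subsolution of~(\ref{v:u})--(\ref{v:n}) because $a u^r\int_\Omega u^p\,dy\ge0$, and Theorem~\ref{Gl:theorem:comp-prins} transfers the blow-up. You would need either to reproduce the boundary-layer construction of \cite{GG1} or to cite it; the integral argument as you describe it does not close for $l\le q<2l-1$.
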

\begin{proof}
We suppose at first that $l>\max{(1,(q+1)/2)}$ and (\ref{Gl:k(x,y,t)2}) holds. Let us consider the following problem
    \begin{eqnarray} \label{Gl:ad_problem2}
    \left\{
    \begin{array}{ll}
    u_t=\Delta u-b(x,t)u^q,\;x\in\Omega,\;t>0,\\
    u(x,t)=\int_{\Omega}{k(x,y,t)u^l(y,t)}\,dy,\;x\in\partial\Omega,\;t>0,\\
    u(x,0)= u_0(x),\; x\in\Omega.
    \end{array} \right.
    \end{eqnarray}
As it is proved in \cite{GG1}, problem~(\ref{Gl:ad_problem2}) has positive finite time blow-up solutions.
We note that any solution of~(\ref{Gl:ad_problem2}) is a subsolution of~(\ref{v:u})--(\ref{v:n}).
Applying Theorem~\ref{Gl:theorem:comp-prins}, we prove the theorem.

Now we assume that $r+p>\max(q,1)$ and (\ref{Gl:a(x,t)2}) holds. We put $\overline b=\sup\limits_{Q_{t_1}}{b(x,t)}.$

Let $r\geq q>1.$  We denote
\begin{equation}\label{Gl:J(t)}
J(t)=\exp{(\lambda_1 t)}\int_{\Omega}{u(x,t)\varphi(x)}\,dx,
\end{equation}
where $\varphi(x)$ is the solution of (\ref{Bl:phi}) satisfying
   \begin{equation}\label{varphi_int}
   \int_{\Omega}{\varphi(x)}\,dx=1.
   \end{equation}
 Then using (\ref{v:u}), (\ref{Bl:phi}), Green's identity and the
inequality
    \begin{equation}\label{Gl:varphi_int}
   \frac{\partial\varphi(x)}{\partial n} \leq 0, \quad x \in \partial
   \Omega,
    \end{equation}
where $\nu$ is unit outward normal on $\partial\Omega,$  we get for $t<t_1$
    \begin{equation}\label{Gl:J}
 \begin{split}
J'(t) \geq & \exp{(\lambda_1 t)} \int_{\Omega}{\left(a(x,t)u^r\int_{\Omega}{u^p(y,t)}\,dy-b(x,t)u^q\right)\varphi(x)}\,dx \\
\geq & \exp{(\lambda_1 t)}\left(a_0\int_{\Omega}{u^p(y,t)}\,dy\int_{\Omega}{u^r(x,t)\varphi(x)}\,dx-\overline b\int_{\Omega}{u^q(x,t)\varphi(x)}\,dx\right).
 \end{split}
\end{equation}
By Lemma~\ref{Gl:Lem2}
    \begin{equation}\label{Gl:lm1}
    a_0\int_{\Omega}{u^p(y,t)}\,dy\left[\int_{\Omega}{u^q(x,t)\varphi(x)}\,dx\right]^{\frac{r-q}{q}}-\overline b\geq c_2>0
    \end{equation}
for $t<t_1$ and large initial data. Taking into account~(\ref{Gl:J}), (\ref{Gl:lm1}),
H{$\ddot o$}lder's and Jensen's inequalities, we have for $t<t_1$
  $$
 	J'(t)\geq \exp{(\lambda_1 t)}\int_{\Omega}{u^q(x,t)\varphi(x)}\,dx\left(a_0\int_{\Omega}{u^p(y,t)}\,dy
 	\left[\int_{\Omega}{u^q(x,t)\varphi(x)}\,dx\right]^{\frac{r-q}{q}}-\overline b\right)
$$
$$
 	\geq c_2\exp{[\lambda_1(1-q)t]}J^q(t).
$$
Hence, $J(t)$ blows up in finite time $T\;(T<t_1)$ for large initial data.

Let $r>1\geq q.$ By Lemma~\ref{Gl:Lem2}
    \begin{equation}\label{Gl:lm2}
        \int_{\Omega}{u^p(y,t)}\,dy\geq c_3>0
    \end{equation}
for $t<t_1$ and large initial data. Then using
(\ref{Gl:J}), (\ref{Gl:lm2}), $u^q\leq u+1$ and Jensen's inequalities, we obtain for $t<t_1$
   $$
            J'(t) \geq  a_0 c_3 \exp{[\lambda_1(1-r)t]}J^r(t) - \overline bJ(t) - \overline b \exp{(\lambda_1 t)}
   $$
and again $J(t)$ blows up in finite time $T\;(T<t_1)$ for large initial data.

Let $r<q.$ Without loss of generality, we may assume
that $\Omega$ contains the origin.  We introduce designations
$$G=\left\lbrace(x,t):0\leq t<T_1,\;|x|\leq A\sqrt{T-t}\right\rbrace\;(T_1<T),\quad G_{\tau} = G \cap \{ t= \tau \} \;(0< \tau <T_1)$$
and consider the auxiliary problem
\begin{eqnarray} \label{Dir}
 \left\{
 \begin{array}{ll}
   u_t=\Delta u + a(x,t)u^r\int_{G_t} {u^p(y,t)}\,dy - b(x,t)u^q, \; (x,t) \in G, \\
   u(x,t)=0, \; |x| = A\sqrt{T-t}, \; 0<t<T_1,\\
    u(x,0)=  A^2-|x|^2/T,\; |x| < A\sqrt{T},
    \end{array} \right.
    \end{eqnarray}
where $ A>0 $ will be determined below and $T<\min
\{1, t_1\}$ we choose in such a way that points $x$ with $|x|\leq A\sqrt{T}$ belong to $\Omega.$
Let $u(x,t)$ be a positive in $\overline G$ solution of~(\ref{v:u})--(\ref{v:n}) such that
 $u_0 (x) \geq \left(A^2-|x|^2/T \right)_+.$ Obviously, $u(x,t)$ is a supersolution
of~(\ref{Dir}). We construct a subsolution of~(\ref{Dir}) in the following form
    \begin{equation}\label{Gl:V(A,T)}
        \underline u(x,t)=(T-t)^{-\gamma}V\left(\frac{|x|}{\sqrt{T-t}}\right),
    \end{equation}
where $V(\xi)=\left(A^2-\xi^2\right)_+,$ $\xi=|x|/\sqrt{T-t}$
and $\gamma >0$ will be chosen below. It is easy to see
$\underline u(0,t) \to \infty$ as $t \to T_1$ and $T_1 \to T.$
We show that
    \begin{equation}\label{Gl:equation}
    \Lambda \underline u \leq 0
    \end{equation}
in $G,$ where
    $$
    \Lambda u \equiv u_t - \Delta u - a(x,t)u^r\int_{G_t} {u^p(y,t)}\,dy + b(x,t)u^q.
    $$
Note that
    \begin{equation}\label{Gl:10}
    \int_{G_t}{V^p(\xi)}\,dx=(T-t)^{\frac{n}{2}}\int_{|z| \leq A}{(A^2-|z|^2)^p_+}\,dz = C(A) (T-t)^{\frac{n}{2}}.
    \end{equation}
By~(\ref{Gl:10}) we obtain
    \begin{eqnarray}\label{Gl:11}
        \Lambda \underline u
        &\leq& \gamma(T-t)^{-\gamma-1}V(\xi)-(T-t)^{-\gamma-1}\xi^2 +  2n(T-t)^{-\gamma-1} \nonumber \\
        &-& a_0 C(A)(T-t)^{\frac{n}{2}-\gamma(r+p)}V^r(\xi) + \overline b (T-t)^{-\gamma q} V^q(\xi)
    \end{eqnarray}
 for points of $G.$
 Further we distinguish the two zones  $0\leq\xi<\theta A$
and $\theta A\leq\xi<A,$ where $\theta\in(0,1)$ will be chosen below.

For $\theta A\leq\xi<A$  we have
    \begin{equation}\label{V}
    V(\xi) \leq (1-\theta^2) A^2.
    \end{equation}
From~(\ref{Gl:11}), (\ref{V}) it follows that
    \begin{eqnarray}\label{Gl:12}
     \Lambda \underline u & \leq & \left(\gamma(1-\theta^2)A^2-\theta^2A^2+2n\right)(T-t)^{-\gamma-1} - a_0 C(A) (T-t)^{\frac{n}{2}-\gamma(r+p)}V^r(\xi)  \nonumber \\
     &+& \overline b(T-t)^{-\gamma q}V^q(\xi).
    \end{eqnarray}
We put
       \begin{eqnarray}\label{A}
        A=3\sqrt{n},\;\theta^2=\frac{\gamma+1/2}{\gamma+1}
    \end{eqnarray}
and estimate the first term on the right hand side of~(\ref{Gl:12})
    \begin{equation}\label{Gl:13}
    \left(\gamma(1-\theta^2)A^2-\theta^2A^2+2n\right)(T-t)^{-\gamma-1} = -\frac{5n}{2}(T-t)^{-\gamma-1}<0.
    \end{equation}
By (\ref{V})
        \begin{equation}\label{Gl:14}
        a_0 C(A) (T-t)^{\frac{n}{2}-\gamma(r+p)}V^r(\xi) \geq \overline b (T-t)^{-\gamma q}V^q(\xi)
    \end{equation}
for small values of $T$ and $\gamma > n/[2(r+p-q)].$ From~(\ref{Gl:12}),
(\ref{Gl:13}), (\ref{Gl:14}) it follows~(\ref{Gl:equation}) for $\xi \in [\theta A, A).$

For $0\leq\xi<\theta A$ we have
$$V(\xi) \geq (1-\theta^2)A^2=\frac{9n}{2(\gamma+1)}.$$
Then by (\ref{Gl:11})  inequality (\ref{Gl:equation}) still holds for $ 0\leq\xi<\theta A$ if $T$ is small and
$$
\gamma>\max\left(\frac{n}{2(r+p-q)},\frac{n+2}{2(r+p-1)}\right).
$$
Applying comparison principle for (\ref{Dir}), we obtain
$u(x,t) \geq \underline u(x,t)$ in $G.$ Hence, $u(x,t)$ blows up in finite time.

In the case $q \leq r \leq 1$ we have $\gamma q<\gamma+1.$
Then the function in (\ref{Gl:V(A,T)}) satisfies (\ref{Gl:equation}) for $0 \leq \xi < A.$
Indeed, by virtue of (\ref{Gl:12})--(\ref{Gl:13}) we have
   $$
  \Lambda \underline u \leq \left(-\frac{5n}{2}(T-t)^{-\gamma-1}+\overline b(T-t)^{-\gamma q}V^q(\xi)\right)
    -      a_0 C(A) (T-t)^{\frac{n}{2}-\gamma(r+p)}V^r(\xi) \leq 0
    $$
  for $ \theta A \leq \xi < A$ and small values of $T.$
  For $0\leq\xi<\theta A$  inequality~(\ref{Gl:equation})
  holds if $\gamma> (n+2)/[2(r+p-1)]$ and $T$ is small.
Arguing as in the previous case, we complete the proof.
\end{proof}


\section{Blow-up of all nontrivial solutions and global existence of solutions for small initial data}\label{Bl}
In this section we find conditions which guarantee blow-up in finite time
of all nontrivial solutions and prove global existence of solutions for small initial data.

First we show that for $q<\min(r+p,1)$ under some conditions problem (\ref{v:u})--(\ref{v:n})
has nontrivial global solutions for any $a(x,t)$ and $k(x,y,t).$  Suppose that
\begin{equation}\label{Bl:b(x,t)}
\inf\limits_{\Omega}{b(x,0)}>0.
\end{equation}
\begin{theorem}
Let (\ref{Bl:b(x,t)}) hold and either $q<\min(r+p,1),\;l>1$ or $r\geq q,$ $(q+1)/2<l\leq 1.$
 Then problem~(\ref{v:u})--(\ref{v:n}) has global solutions for small initial data.
\end{theorem}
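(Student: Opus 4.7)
The plan is to construct, for every $T>0$, a positive supersolution $v_T$ of (\ref{v:u})--(\ref{v:n}) on $Q_T$ whose initial trace $v_T(\cdot,0)$ is bounded below by a positive constant $\eta_0$ independent of $T$. By Theorem \ref{Gl:theorem:comp-prins}, any nontrivial initial datum $u_0$ with $u_0\le v_T(\cdot,0)$ will then produce a solution $u\le v_T$ on $[0,T]$, and global existence follows because $T$ is arbitrary; nontriviality comes from choosing $u_0\not\equiv 0$.

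In the first case, $q<\min(r+p,1)$ and $l>1$, the natural candidate is the constant profile $v\equiv\eta$ with $\eta>0$ small. Then (\ref{Gl:sup^g}) reduces to $\eta^{l-1}\sup_{S_T}\int_{\Omega}k(x,y,t)\,dy\le 1$, which holds for small $\eta$ because $l>1$, while (\ref{Gl:sup^u}) reduces to $a(x,t)|\Omega|\,\eta^{r+p-q}\le b(x,t)$ and, since $r+p>q$, is met for small $\eta$ at any point where $b$ is bounded below by a positive constant. By (\ref{Bl:b(x,t)}) and the H\"older continuity of $b$, this positivity is automatic on some $\overline{\Omega}\times[0,t_{*}]$, so the constant supersolution works there directly. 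To extend past $t_{*}$, I would add a small spatial term of the form $\eta'\varphi(x)$, with $\varphi$ the principal Dirichlet eigenfunction of (\ref{Bl:phi}), so that $-\Delta v=\eta'\lambda_1\varphi\ge 0$ supplies a positive interior margin which, together with the sink $bv^q$ (dominant over the source $av^r\int v^p\,dy$ whenever $b>0$, because $v$ is small and $q<r+p$), carries (\ref{Gl:sup^u}) even at points where $b$ degenerates; the boundary trace of $v$ is still $\eta$, so (\ref{Gl:sup^g}) is preserved.

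In the second case, $r\ge q$ and $(q+1)/2<l\le 1$, a constant supersolution is prohibited at the boundary since the condition $\eta^{1-l}\ge\int_\Omega k\,dy$ cannot be arranged for small $\eta$ when $l\le 1$. I would instead adapt the boundary-blowup profile from the final part of the proof of Theorem \ref{Gl:theorem1}---the function in (\ref{Gl:function}) built on $[(s+\varepsilon)^{-\gamma}-\omega^{-\gamma}]_{+}^{\beta/\gamma}+A$ in coordinates $(\bar x,s)$ near $\partial\Omega$---now with the additive constant $A$ taken small. The restriction $l>(q+1)/2$ plays exactly the same integrability role as in Theorem \ref{Gl:theorem1}: for a suitable choice of $\varepsilon$ the boundary integral $\int_\Omega k v^l\,dy$ is controlled by $v|_{\partial\Omega}$, while the interior inequality is verified separately in the near-boundary zone and away from the boundary, with the smallness of $A$ and the assumption $r\ge q$ replacing the role played there by $\max(r+p,2p+1)<q$.

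The main obstacle is to make the initial trace of $v_T$ uniformly bounded below as $T$ varies, given that (\ref{Bl:b(x,t)}) secures positivity of $b$ only at $t=0$ and nothing is assumed on the large-time behavior of $a$ and $k$. This is why the supersolution must include a spatial mechanism---the eigenfunction $\varphi$ in Case 1 or the boundary-blowup profile in Case 2---capable of carrying the supersolution inequality at points where $b$ degenerates or $\int_\Omega k\,dy$ grows, so that a single smallness threshold on $u_0$ suffices for every time horizon.
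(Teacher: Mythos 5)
There is a genuine gap, and it is exactly at the point you yourself flag as ``the main obstacle.'' Hypothesis (\ref{Bl:b(x,t)}) gives positivity of $b$ only at $t=0$, hence (by continuity) only on some short cylinder $\overline\Omega\times[0,T]$, and the theorem assumes nothing about $a$, $b$ or $k$ for large $t$. Consequently no supersolution that stays uniformly positive for all time can exist in general: if, say, $\int_\Omega k(x,y,t)\,dy\to\infty$ as $t\to\infty$, then any bounded $v$ with $\inf v=\eta_0>0$ violates the boundary inequality $v\ge\int_\Omega k\,v^l\,dy\ge\eta_0^l\int_\Omega k\,dy$; likewise a time-independent margin $\eta'\lambda_1\varphi(x)$ (which moreover vanishes on $\partial\Omega$) cannot dominate $a(x,t)v^r\int_\Omega v^p\,dy$ at times and places where $b$ degenerates and $a$ is large. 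So your proposed extensions past $t_*$ --- the term $\eta'\varphi(x)$ in Case 1 and the stationary boundary-layer profile of (\ref{Gl:function}) with small $A$ in Case 2 --- cannot be completed.

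The idea you are missing is finite-time \emph{extinction}, which is what the hypotheses actually encode: $q<1$ holds in both cases (explicitly in the first, and implied by $(q+1)/2<l\le 1$ in the second), so the absorption $-bu^q$ can drive the solution to zero before $b$ has a chance to degenerate. The paper fixes $T$ small enough that $b_0=\inf_{Q_T}b>0$ and builds supersolutions that themselves vanish at a finite time: $g(t)=\beta[T-t]_+^{1/(1-q)}+\varepsilon$ in the first case, and $v=(\delta-s-t)_+^{\gamma}+\varepsilon$ with $2/(1-q)<\gamma<1/(1-l)$ (this interval is nonempty precisely because $l>(q+1)/2$) in the second. Letting $\varepsilon\to 0$ gives $u(\cdot,T)\equiv 0$ (resp. $u(\cdot,\delta)\equiv 0$), and the solution is continued by $u\equiv 0$ for later times, which is global regardless of the behavior of $a$, $b$, $k$ afterwards. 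Note also that the paper's Case 2 profile is bounded and decreasing in time, not the boundary-blowup profile of Theorem~\ref{Gl:theorem1}; the role of $l>(q+1)/2$ there is to reconcile the time-decay rate forced by $q<1$ with the boundary inequality, not an integrability condition on a singular layer.
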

\begin{proof}
We put $b_0=\inf\limits_{Q_T}{b(x,t)}$ and choose $T$ so that $b_0 > 0.$

Suppose that $q<\min(r+p,1),\;l>1.$  A straightforward
computation shows that  for small $\beta,$ $\varepsilon$ and $u_0(x)$
\begin{equation*}
    g(t)=\beta [T-t]_+^{\frac{1}{1-q}}+\varepsilon
\end{equation*}
is a supersolution of~(\ref{v:u})--(\ref{v:n}) in $Q_T.$
Applying Theorem~\ref{Gl:theorem:comp-prins}, we have $u(x,t)\leq g(t)$ in $Q_T$.
Passing to the limit as $\varepsilon\rightarrow0,$ we obtain
$$
u(x,t) \leq \beta [T-t]_+^{\frac{1}{1-q}},
\; (x,t) \in Q_T.
$$
Now we put  $u(x,t) \equiv 0$ for $t \geq T.$

For $r\geq q,$ $(q+1)/2<l \leq 1$ to construct a supersolution we use the
change of variables as in Theorem~\ref{Gl:theorem1}. For points of $Q_{\delta,t_0}$ define
     \begin{equation*}
    v(x,t)=  v((\overline x,s),t)=(\delta-s-t)^{\gamma}_+ +\varepsilon,
     \end{equation*}
where $\delta > 0, $ $\varepsilon> 0,$ $t_0< \min (\delta, T),$ $2/(1-q)<\gamma< 1/(1-l)$
for $l<1$ and $2/(1-q)<\gamma $ for $l=1.$ In $\overline{Q_{t_0}}\setminus Q_{\delta,t_0}$
we put $ v(x,t)= \varepsilon.$ Then $ v(x,t)$ is a supersolution of~(\ref{v:u})--(\ref{v:n}) in $Q_{t_0}$ if
$ u_0(x)\leq (\delta-s)^{\gamma}_+.$ Indeed, by (\ref{Gl:M}), (\ref{Gl:new-coord}),
(\ref{Gl:enq4}), (\ref{L})--(\ref{Est}) for small $\delta$ and $\varepsilon$
we have $ Lv \ge 0$ in $ Q_{t_0} \setminus Q_{\delta,t_0}$ and
$$
 	Lv\geq-\gamma(\delta-s-t)_+^{\gamma-1}-\gamma(\gamma-1)(\delta-s-t)_+^{\gamma-2}-\gamma\overline c(\delta-s-t)_+^{\gamma-1}
$$
$$
 	- 2^p M \left((\delta-s-t)_+^{\gamma} + \varepsilon\right)^r \left( \delta^{\gamma p + 1} \overline J + \varepsilon^p |\Omega| \right) +
 	b_0\left((\delta-s-t)_+^{\gamma}+\varepsilon\right)^q \geq 0 \textrm{ in }Q_{\delta,t_0}.
$$
Estimating the integral $I$ in right hand side of (\ref{Gl:boundary}), we obtain
    \begin{equation*}
        I \leq M \left(
        \int_{\Omega} \left(\delta-s-t\right)_+^{\gamma l}\,dy +
         |\Omega| \varepsilon^l \right) \leq M \left(\overline J \frac{(\delta-t)_+^{\gamma l+1}}{\gamma l+1} + |\Omega|\varepsilon^l\right).
    \end{equation*}
On the other hand, we have $v((\overline x,0),t) = (\delta-t)^{\gamma}_+ + \varepsilon$
and~(\ref{Gl:boundary}) holds for
$$
\delta < \left( \frac{\gamma l+1}{2 M \overline J}
\right)^\frac{1}{\gamma (l-1)+1}, \quad \varepsilon < \left \{
\frac{(\delta-t_0)^\gamma}{2M|\Omega|} \right \}^\frac{1}{l}.
$$
By Theorem~\ref{Gl:theorem:comp-prins} $u(x,t)\leq v(x,t)$ in
$\overline{Q_{t_0}}$ and passing to the limit as $t_0 \to \delta$ and $\varepsilon \to 0,$ we deduce
    $$
		u(x,t)\leq (\delta-s-t)^{\gamma}_+  \textrm{ in } Q_\delta.
    $$
We put $u(x,t) \equiv 0$ for $t \geq \delta$ and complete the proof.
\end{proof}

Now suppose that $q=1.$ We set
\begin{equation}\label{Def:a,b}
\overline a(t)=\sup\limits_\Omega{a(x,t)}, \underline
a(t)=\inf\limits_\Omega{a(x,t)}, \overline
b(t)=\sup\limits_\Omega{b(x,t)}, \underline
b(t)=\inf\limits_\Omega{b(x,t)}, \underline
k(t)=\inf\limits_{\partial\Omega\times\Omega}{k(x,y,t)}.
\end{equation}
Problem (\ref{v:u})--(\ref{v:n}) has global solutions for small initial data if
$q=1,\,\min(r+p,l)>1$ and
\begin{equation}\label{Bl:a(x,t)_v}
    \int_0^{\infty}{\overline a(t)\exp{\left[-(r+p-1)\left(\sigma t+\int_0^t{\underline b(\tau)}\,d\tau\right)\right]}}\,dt<\infty,\;\sigma<\lambda_1,
\end{equation}
\begin{equation}\label{Bl:k(x,y,t)_l}
\int_{\Omega}{k(x,y,t)}\,dy\leq K \exp{\left[(l-1)\left(\gamma
t+\int_0^t{\underline b(\tau)}\,d\tau\right)\right]},\;
x\in\partial\Omega, \; t>0,\;K>0,\;\gamma<\lambda_1
\end{equation}
and conversely (\ref{v:u})--(\ref{v:n}) has no global nontrivial solutions if either $q=1,$ $\min(r,p) \geq 1$ and
\begin{equation}\label{Bl:a(x,t)_v2}
\int_0^{\infty}{\underline
a(t)\exp{\left[-(r+p-1)\left(\lambda_1t+\int_0^t{\overline
b(\tau)}\,d\tau\right)\right]}}\,dt = \infty,
\end{equation}
or $q=1,\,l>1$ and
\begin{equation}\label{Bl:k(x,y,t)_l2}
\int_0^{\infty}{\underline
k(t)\exp{\left[-(l-1)\left(\lambda_1t+\int_0^t{\overline
b(\tau)}\,d\tau\right)\right]}}\,dt = \infty.
\end{equation}
\begin{theorem}\label{q=1}
Let $q=1,\,\min(r+p,l)>1$ and~(\ref{Bl:a(x,t)_v}), (\ref{Bl:k(x,y,t)_l}) hold.
Then there exist global solutions of~(\ref{v:u})--(\ref{v:n}) for small initial data.
If either $q=1,\,\min(r,p)\geq1$ and (\ref{Bl:a(x,t)_v2}) holds
or $q=1,\,l>1$ and (\ref{Bl:k(x,y,t)_l2}) holds, then any nontrivial solution of~(\ref{v:u})--(\ref{v:n}) blows up in finite time.
\end{theorem}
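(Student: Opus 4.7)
The proof has two independent parts.

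\textbf{Global existence.} The plan is to construct a supersolution of the form
\[ v(x,t)=\eta H(t)\exp\!\Bigl[-\sigma t-\int_0^t\underline b(\tau)\,d\tau\Bigr]\Psi(x), \]
where $\eta>0$ is small, $H$ is a bounded increasing multiplier with $H(0)=1$, and $\Psi\in C^2(\overline\Omega)$ is the restriction to $\overline\Omega$ of the first positive Dirichlet eigenfunction of a domain $\Omega\subset\subset\Omega'$ whose first eigenvalue equals $\sigma$. Such $\Psi$ satisfies $-\Delta\Psi=\sigma\Psi$ in $\Omega$ and $0<c_0\leq\Psi\leq c_1$ on $\overline\Omega$; a suitable $\Omega'$ exists for any $\sigma\in(0,\lambda_1)$. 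Without loss of generality $\sigma=\gamma$, since enlarging $\sigma$ in (\ref{Bl:a(x,t)_v}) and $\gamma$ in (\ref{Bl:k(x,y,t)_l}) to a common value in $(0,\lambda_1)$ preserves both hypotheses. Substituting $v$ into $Lv$ with $q=1$, using $-\Delta\Psi=\sigma\Psi$ and $b\geq\underline b$, the interior inequality $Lv\geq 0$ reduces to
\[ H'(t)\geq C_1\eta^{r+p-1}\overline a(t)\exp\!\Bigl[-(r+p-1)\Bigl(\sigma t+\int_0^t\underline b\Bigr)\Bigr]H(t)^{r+p}, \]
with $C_1$ depending only on $\Psi$ and $|\Omega|$. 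Solving the corresponding equality by separation of variables yields $H(t)^{-(r+p-1)}=1-(r+p-1)C_1\eta^{r+p-1}\int_0^t\overline a(s)\exp[-(r+p-1)(\sigma s+\int_0^s\underline b)]\,ds$, which remains bounded away from $0$ on $[0,\infty)$ by (\ref{Bl:a(x,t)_v}) provided $\eta$ is small, so $H$ is bounded. The boundary inequality reduces, using (\ref{Bl:k(x,y,t)_l}) and $\sigma=\gamma$, to $c_0\geq Kc_1^l\eta^{l-1}H_{\max}^{l-1}$, also satisfied for small $\eta$. Taking $u_0\leq\eta c_0$ and invoking Theorem~\ref{Gl:theorem:comp-prins} (applicable since $v>0$) gives $u\leq v$, hence global existence.

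\textbf{Blow-up.} Let $\varphi$ be the first Dirichlet eigenfunction of $-\Delta$ on $\Omega$, positive and normalized by $\int_\Omega\varphi\,dx=1$; by the Hopf lemma $B:=\int_{\partial\Omega}|\partial_\nu\varphi|\,dS>0$. Set
\[ J(t)=\exp\!\Bigl[\lambda_1 t+\int_0^t\overline b(\tau)\,d\tau\Bigr]\int_\Omega u(x,t)\varphi(x)\,dx. \]
Green's identity, the boundary condition (\ref{v:g}), and $k\geq\underline k(t)$ give
\[ \int_\Omega\varphi\,\Delta u\,dx\geq-\lambda_1\int_\Omega u\varphi\,dx+B\underline k(t)\int_\Omega u^l(y,t)\,dy. \]
Differentiating $J$ and cancelling the linear $\lambda_1+\overline b$ contributions via $b\leq\overline b$, the $\underline a$ interior and $\underline k$ boundary contributions give, respectively,
\[ J'(t)\geq e^{\lambda_1 t+\int_0^t\overline b}\underline a(t)\!\int_\Omega\!u^p\,dy\!\int_\Omega\!u^r\varphi\,dx \quad\text{and}\quad J'(t)\geq Be^{\lambda_1 t+\int_0^t\overline b}\underline k(t)\!\int_\Omega\!u^l\,dy. \]
Applying Jensen's inequality (since $r\geq 1$) and H\"older's inequality (since $p\geq 1$ or $l>1$) to bound below $\int u^r\varphi\int u^p$ and $\int u^l$ by constant multiples of $(\int u\varphi)^{r+p}$ and $(\int u\varphi)^l$, then substituting $\int u\varphi=Je^{-\lambda_1 t-\int_0^t\overline b}$, produces
\[ J'\geq C_2\underline a(t)\,e^{-(r+p-1)(\lambda_1 t+\int_0^t\overline b)}J^{r+p}\quad\text{or}\quad J'\geq C_3\underline k(t)\,e^{-(l-1)(\lambda_1 t+\int_0^t\overline b)}J^l. \]
Since $r+p>1$ and $l>1$, separation of variables combined with (\ref{Bl:a(x,t)_v2}) or (\ref{Bl:k(x,y,t)_l2}) forces $J$, and hence $u$, to blow up in finite time. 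The strong maximum principle ensures $J(t_0)>0$ at some $t_0\geq 0$ whenever $u$ is nontrivial, providing the positivity needed to start the estimate.

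\textbf{Main obstacle.} The delicate ingredient for global existence is to align the three decay rates $\sigma$, $\gamma$, $\int_0^t\underline b$ so that the interior and boundary supersolution inequalities close together; the strict inequality $\sigma<\lambda_1$ and the enlarged-domain eigenfunction $\Psi>0$ on $\overline\Omega$ are exactly what make this possible. In the blow-up half, the key technical step is converting the nonlocal boundary integral $\int ku^l\,dy$ into a usable interior lower bound through Green's identity and the Hopf lemma, which is why the $l>1$ hypothesis must be handled via a route distinct from the $\min(r,p)\geq 1$ hypothesis.
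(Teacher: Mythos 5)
Your argument is correct and follows essentially the same route as the paper: for global existence, a supersolution built from the first eigenfunction of an enlarged domain $\widetilde\Omega\supset\supset\Omega$ (the paper fixes its eigenvalue $\widetilde\lambda$ anywhere in $(\max(\sigma,\gamma),\lambda_1)$ rather than exactly at $\sigma=\gamma$, which avoids needing the intermediate value theorem for the eigenvalue) multiplied by a decaying exponential and a bounded ODE factor absorbing the nonlocal source; for blow-up, the functional $J(t)=\exp(\lambda_1 t)\int_\Omega u\varphi\,dx$ (with the $\exp\int_0^t\overline b$ factor incorporated either in $J$ or by a subsequent substitution), Green's identity, Jensen/H\"older, and a first-order differential inequality whose divergence is guaranteed by~(\ref{Bl:a(x,t)_v2}) or~(\ref{Bl:k(x,y,t)_l2}). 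Your explicit treatment of the $l>1$ blow-up case via $\int_{\partial\Omega}|\partial\varphi/\partial n|\,dS=\lambda_1$ is exactly the computation the paper labels ``treated similarly'' and carries out in Theorem~\ref{q>1}.
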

\begin{proof}
Assume that $T$ is any positive constant, $q=1,\,\min(r+p,l)>1$ and (\ref{Bl:a(x,t)_v}), (\ref{Bl:k(x,y,t)_l}) hold.
We choose $\widetilde{\lambda}$ in such a way that
$$\max(\sigma,\gamma)<\widetilde{\lambda}<\lambda_1.$$
Let $\widetilde{\Omega}$ be bounded domain in $\mathbb{R}^n$
with smooth boundary such that $\Omega\subset\subset\widetilde{\Omega}$
and $\widetilde{\lambda}$ be the first eigenvalue of (\ref{Bl:phi})
in $\widetilde{\Omega}.$ Then correspondent eigenfunction $\widetilde\varphi(x)$ satisfies
    $$
    \frac{\sup_{\widetilde\Omega}\widetilde\varphi(x)}{\inf_{\Omega}\widetilde\varphi(x)}<d
    $$
for some $d>0.$ Choosing
    $$
    0<\varepsilon\leq\left(K d^l\right)^{-\frac{1}{l-1}},\;\sup\limits_{\widetilde{\Omega}}{\widetilde\varphi(x) }=d\varepsilon,
    $$
we have $\inf\limits_{\partial\Omega}{\widetilde\varphi(x)}>\varepsilon.$
We put  $N=\sup\limits_\Omega{\widetilde\varphi^{r-1}(x) \int_\Omega{\widetilde\varphi^p(y)}\,dy}$ and
$$
f(t)=\exp{(-\widetilde{\lambda}t)}\left(B-(r+p-1)N\int_0^{t}{\overline a(\tau)
\exp{\left[-(r+p-1)\left(\widetilde\lambda\tau+\int_0^\tau{\underline
b(s)}\,ds\right)\right]}}\,d\tau\right)^{-\frac{1}{r+p-1}},
$$
where $$
B=1+(r+p-1)N\int_0^{\infty}{\overline a(\tau)
\exp{\left[-(r+p-1)\left(\widetilde\lambda\tau+\int_0^\tau{\underline
b(s)}\,ds\right)\right]}}\,d\tau.
$$

It is easy to check that
$$
v(x,t)=\widetilde\varphi(x)f(t)\exp{\left(-\int\limits_0^t{\underline b(\tau)}\,d\tau\right)}
$$
is a supersolution of~(\ref{v:u})--(\ref{v:n}) in $Q_T$  for $u_0(x) \leq B^{-\frac{1}{r+p-1}}\widetilde\varphi(x).$
By Theorem~\ref{Gl:theorem:comp-prins} there exist global solutions of
(\ref{v:u})--(\ref{v:n}).

Now suppose that $q=1,\,\min(r,p)\geq1$ and (\ref{Bl:a(x,t)_v2}) holds.
Multiplying (\ref{v:u}) by $\varphi(x)\exp(\lambda_1t),$
where $\varphi(x)$ is defined in (\ref{Bl:phi}) and (\ref{varphi_int}),
and integrating the obtained equation over $\Omega,$ from (\ref{Gl:J(t)}), (\ref{Gl:varphi_int}),
Green's identity and Jensen's inequality, we obtain
    $$
    J'(t)\geq [\sup_\Omega \varphi(x)]^{-1} \underline a(t)\exp[\lambda_1(1-r-p)t]J^{r+p}(t) - \overline b(t)J(t).
    $$
Now (\ref{Bl:a(x,t)_v2}) guarantees blow-up of $J(t)$ in finite time. The case $q=1,\,l>1$ is treated similarly.
\end{proof}
\begin{remark}
The conclusion of Theorem~\ref{q=1} is not true if $\sigma > \lambda_1$ in (\ref{Bl:a(x,t)_v}),
or $\gamma=\lambda_1$ in (\ref{Bl:k(x,y,t)_l}), or $\lambda_1$ is replaced by a smaller value
in~(\ref{Bl:a(x,t)_v2}) or (\ref{Bl:k(x,y,t)_l2}).
\end{remark}

Further we consider the case $q>1.$ To prove blow-up of all nontrivial solutions
we need universal lower bound for solutions of (\ref{v:u})--(\ref{v:n}). Assume that
\begin{equation}\label{Bl:u}
b(x,t)\leq \varepsilon(t)\exp[\lambda_1 (q-1)t],\; x\in\Omega, \;
t>0,
\end{equation}
where
\begin{equation}\label{Bl:Eps}
\varepsilon(t) \in C([0,\infty)),\; \varepsilon(t) \geq 0, \;
\int_0^\infty \varepsilon(t) \, dt < \infty.
\end{equation}
\begin{lemma}\label{Bl:Est}
Let $u(x,t)$ be a solution of (\ref{v:u})--(\ref{v:n})
in $Q_T,$ $q>1,$ $u_0(x)\not\equiv 0$ and (\ref{Bl:u}), (\ref{Bl:Eps}) hold. Then for any
$t_0 \in (0,T)$ there exists $d>0,$ which does not depend on $T,$ such that
    \begin{equation}\label{Bl:low}
u(x,t)\geq d  \varphi(x)\exp(-\lambda_1 t),\; x\in\Omega, \; t \in
(t_0, T),
\end{equation}
where $\varphi(x)$ is defined in (\ref{Bl:phi}) and
(\ref{varphi_int}).
\end{lemma}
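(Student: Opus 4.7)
The plan is to obtain (\ref{Bl:low}) in two stages: first derive a pointwise bound $u(\cdot,t_1)\ge c_1\varphi$ at some intermediate time $t_1\in(0,t_0)$ by a linearization and a Hopf-type argument, and then propagate this bound forward on $[t_1,T]$ by constructing an explicit subsolution $\underline u(x,t)=d(t)\varphi(x)\exp(-\lambda_1 t)$ whose time factor $d(t)$ is controlled by a scalar ODE that remains bounded away from zero thanks to (\ref{Bl:Eps}).

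For the first stage, fix $t_1\in(0,t_0)$. Since $u\in C(\overline{Q_{t_1}})$, the constant $K=\sup_{Q_{t_1}}b(x,t)u^{q-1}(x,t)$ is finite. From (\ref{v:u}) and $a,u\ge 0$ one has $u_t-\Delta u+Ku\ge 0$ in $Q_{t_1}$, while $u\ge 0$ on $\partial\Omega$ and $u(\cdot,0)=u_0$. The linear parabolic comparison principle applied to the auxiliary problem $v_t=\Delta v-Kv$ in $\Omega$, $v|_{\partial\Omega}=0$, $v(\cdot,0)=u_0$, yields $u\ge v$ in $Q_{t_1}$. Since $u_0\ge 0$, $u_0\not\equiv 0$, the function $e^{Kt}v$ solves the heat equation with homogeneous Dirichlet data and non-trivial nonnegative initial datum; the classical Hopf boundary lemma then delivers $v(x,t_1)\ge c\varphi(x)$ for some $c>0$, whence $u(x,t_1)\ge c_1\varphi(x)$ with $c_1=ce^{-Kt_1}$.

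For the second stage, a direct computation using $\Delta\varphi=-\lambda_1\varphi$ gives $\underline u_t-\Delta\underline u=d'(t)\varphi(x)e^{-\lambda_1 t}$. Dropping the nonpositive nonlocal term $-a\underline u^r\int\underline u^p\,dy$, it suffices that
\begin{equation*}
d'(t)\varphi(x)e^{-\lambda_1 t}+b(x,t)d(t)^q\varphi(x)^q e^{-\lambda_1 qt}\le 0,
\end{equation*}
and invoking (\ref{Bl:u}) together with $\varphi(x)^{q-1}\le M^{q-1}$ for $M=\sup_\Omega\varphi$, this reduces to the scalar inequality $d'\le -M^{q-1}\varepsilon(t)d^q$. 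I take $d$ to be the solution of the corresponding ODE with $d(t_1)=c_1 e^{\lambda_1 t_1}$; integration yields
\begin{equation*}
d(t)^{1-q}=d(t_1)^{1-q}+(q-1)M^{q-1}\int_{t_1}^t\varepsilon(s)\,ds,
\end{equation*}
and the integrability condition in (\ref{Bl:Eps}) bounds the right-hand side uniformly in $t$, so $d(t)\ge d_\infty>0$ for all $t\ge t_1$, with $d_\infty$ independent of $T$.

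To finish, $\underline u(x,t_1)=c_1\varphi(x)\le u(x,t_1)$ by stage one and $\underline u=0\le\int_\Omega k(x,y,t)u^l(y,t)\,dy$ on $\partial\Omega\times[t_1,T]$, so $\underline u$ is a subsolution of (\ref{v:u})--(\ref{v:g}) starting at $t_1$ with initial datum dominated by $u(\cdot,t_1)$. Theorem~\ref{Gl:theorem:comp-prins} then gives $u\ge\underline u\ge d_\infty\varphi(x)e^{-\lambda_1 t}$ on $\Omega\times[t_1,T]$, which in particular yields (\ref{Bl:low}) on $[t_0,T]$. I expect the main obstacle to be the stage-one estimate: converting mere non-triviality of $u_0$ into a quantitative pointwise bound $u(\cdot,t_1)\ge c_1\varphi$ with the correct boundary decay requires the Hopf analysis of the linearized problem. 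Once this is in hand, the rest is an essentially one-variable ODE argument in which the integrability of $\varepsilon(t)$ is used in exactly one place, to keep $d(t)$ away from zero uniformly in $T$.
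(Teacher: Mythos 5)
Your proposal is correct and follows essentially the same route as the paper: a strong maximum principle plus Hopf boundary-point argument applied to a comparison function for an auxiliary problem gives $u(\cdot,t_0)\geq c\,\varphi$, and then an explicit subsolution of the form $\varphi(x)e^{-\lambda_1 t}$ times an amplitude governed by a scalar ODE in $\varepsilon(t)$, kept away from zero by the integrability in (\ref{Bl:Eps}), propagates the bound uniformly in $T$. The only substantive difference is that the paper runs both the Hopf step and the final comparison inside the auxiliary Dirichlet problem $v_t=\Delta v-b(x,t)v^q$ (with a smooth $v_0\leq u_0$), which avoids the corner incompatibility at $\partial\Omega\times\{0\}$ and, more importantly, sidesteps invoking Theorem~\ref{Gl:theorem:comp-prins} in the last step, whose positivity hypothesis your $\underline u$ (vanishing on $\partial\Omega$) does not satisfy when $\min(r,p,l)<1$; you should likewise conclude by comparison for the local Dirichlet problem rather than for (\ref{v:u})--(\ref{v:n}).
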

\begin{proof}
For $T_0 \in (0,T)$ denote $m_0=\max\left(\sup\limits_{Q_{T_0}}{u(x,t)},\sup\limits_{Q_{T_0}}{b(x,t)}\right).$
Let $v(x,t)$ be a solution of the problem
\begin{eqnarray} \label{problemV}
    \left\{
    \begin{array}{ll}
    v_t=\Delta v - b(x,t) v^q,\;(x,t) \in Q_{T_0},\\
    v(x,t)=0,\; (x,t) \in S_{T_0},\\
    v(x,0)= v_0(x),\; x\in\Omega,
    \end{array} \right.
    \end{eqnarray}
where $v_0(x) \in C_0^\infty (\Omega),\,$ $0 \leq v_0(x) \leq u_0(x)
\,$ and $v_0(x) \not\equiv 0.$ Obviously, $u(x,t)$ is a supersolution of~(\ref{problemV}).
By comparison principle for (\ref{problemV}) we obtain
\begin{equation}\label{L1}
u(x,t) \geq v(x,t), (x,t) \in Q_{T_0}.
\end{equation}
We put $h(x,t) = \exp(\mu t) v(x,t),$ where $\mu \geq m_0^q.$ Then in $Q_{T_0}$
$$
h_t - \Delta h \geq \exp(\mu t) v (\mu - b(x,t) v^{q-1}) \geq 0.
$$
Since $h(x,0)= v_0(x)$ and $v_0(x)$ is nontrivial nonnegative function in $\Omega,$ by strong maximum principle
\begin{equation}\label{L2}
h(x,t)> 0, \; (x,t) \in Q_{T_0}.
\end{equation}
By virtue of Theorem~3.6 in \cite{H}
\begin{equation}\label{L3}
\max_{\partial\Omega}\frac{\partial h(x,t_0)}{\partial n} < 0,
\end{equation}
where $t_0 \in (0,T_0).$ From (\ref{L2}) and (\ref{L3}) it follows that
$$
v(x,t)> 0 \; \textrm{ in } \; Q_{T_0} \; \textrm{ and } \;
\max_{\partial\Omega}\frac{\partial v(x,t_0)}{\partial n} < 0.
$$
Then there exists positive constant $d_0$ such that
 \begin{equation}\label{L4}
v(x,t_0)\geq d_0  \varphi(x)\exp(-\lambda_1 t_0),\; x \in\Omega.
\end{equation}
By (\ref{L1}) and (\ref{L4})
 \begin{equation*}\label{}
u(x,t_0)\geq d_0  \varphi(x)\exp(-\lambda_1 t_0),\; x \in\Omega.
\end{equation*}
 A straightforward
computation shows that for large $f_0$
$$
\underline u (x,t) = \varphi(x) \exp{(-\lambda_1 t)} \left\{ f_0 + (q-1) [\sup_\Omega \varphi(x)]^{q-1} \int_{t_0}^{t} \varepsilon(\tau) d \tau \right\}^{-\frac{1}{q-1}}
$$
is a subsolution of~(\ref{problemV}) in $Q_{T_0}
\setminus \overline{Q_{t_0}}$ with initial datum $v(x,t_0) =
u(x,t_0).$ Application of comparison principle for (\ref{problemV}) completes the proof.
\end{proof}
Next we assume that
\begin{equation}\label{Bl0:k}
\int_{\Omega}{k(x,y,t)}\,dy\leq A\exp{(\sigma t)},\;
x\in\partial\Omega, \; t>0, \; A>0,\;\sigma<\lambda_1(l-1)
\end{equation}
and
\begin{equation}\label{Bl0:b1}
b(x,t)\geq Ba(x,t)\exp{(-\omega t)},\; x\in\Omega, \; t>0, \;
B>0,\;\omega<\lambda_1(r+p-q)
\end{equation}
or $b(x,t)$ satisfies (\ref{Bl:u}), (\ref{Bl:Eps}), where
\begin{equation}\label{Bl0:b}
\lim_{t \to \infty}  \varepsilon(t) = 0,
\end{equation}
 and
\begin{equation}\label{Bl0:k2}
k(x,y,t)\geq D\exp{[\lambda_1(l-1)t]},\; x\in\partial\Omega,\;
y\in\Omega, \; D>0
\end{equation}
for large values of $t.$
\begin{theorem}\label{q>1}
If $l>1,\;1<q<r+p$ and (\ref{Bl0:k}), (\ref{Bl0:b1}) hold,
then there exist global solutions of (\ref{v:u})--(\ref{v:n}) for small initial data.
If $l\geq q>1$ and (\ref{Bl:u}), (\ref{Bl:Eps}), (\ref{Bl0:b}),
(\ref{Bl0:k2}) hold, then any nontrivial solution of (\ref{v:u})--(\ref{v:n}) blows up in finite time.
\end{theorem}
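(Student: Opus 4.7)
For the first claim (global existence when $l>1$, $1<q<r+p$, and (\ref{Bl0:k}), (\ref{Bl0:b1}) hold), I would construct an explicit positive supersolution of the form $v(x,t)=\delta\,\widetilde\varphi(x)\exp(-\widetilde\lambda_1 t)$, where $\widetilde\Omega\supset\supset\Omega$ is an enlarged smooth domain, $\widetilde\lambda_1$ is its first Dirichlet eigenvalue, and $\widetilde\varphi$ the corresponding eigenfunction (strictly positive on $\overline\Omega$). Because the first eigenvalue depends continuously on the domain, any $\widetilde\lambda_1\in(\max(\sigma/(l-1),\omega/(r+p-q)),\lambda_1)$ can be realised by choosing $\widetilde\Omega$ close enough to $\Omega$; this interval is nonempty by (\ref{Bl0:k}) and (\ref{Bl0:b1}). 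Since $v_t-\Delta v=0$, the interior inequality reduces to $bv^q\geq av^r\int v^p\,dy$. Substituting the explicit form of $v$ and invoking (\ref{Bl0:b1}), this becomes
\begin{equation*}
B\delta^{q-r-p}\widetilde\varphi^{q-r}(x)\exp\bigl[(\widetilde\lambda_1(r+p-q)-\omega)t\bigr]\geq\int_\Omega\widetilde\varphi^p(y)\,dy,
\end{equation*}
which holds uniformly in $t\geq0$ for $\delta$ small, because the exponent is nonnegative and $q-r-p<0$. The nonlocal boundary inequality reduces, using (\ref{Bl0:k}), to $\widetilde\varphi(x)\geq\delta^{l-1}A(\sup\widetilde\varphi)^l\exp[(\sigma-\widetilde\lambda_1(l-1))t]$, which also holds for small $\delta$ since $l>1$ and the exponent is negative. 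For $u_0$ small enough that $u_0\leq\delta\widetilde\varphi$, Theorem~\ref{Gl:theorem:comp-prins} then yields $u\leq v$ and hence global existence.

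For the blow-up claim (when $l\geq q>1$ and (\ref{Bl:u}), (\ref{Bl:Eps}), (\ref{Bl0:b}), (\ref{Bl0:k2}) hold), I would rescale by setting $v(x,t)=\exp(\lambda_1 t)u(x,t)$. A direct computation from (\ref{v:u})--(\ref{v:n}) combined with (\ref{Bl:u}) and (\ref{Bl0:k2}) yields, for $t$ large,
\begin{equation*}
v_t\geq\Delta v+\lambda_1 v-\varepsilon(t)v^q\text{ in }\Omega,\qquad v(x,t)\geq D\int_\Omega v^l(y,t)\,dy\text{ on }\partial\Omega.
\end{equation*}
Lemma~\ref{Bl:Est} provides $v(x,t)\geq d\varphi(x)$ for $t>t_0$, so the functional $L(t):=\int_\Omega v\varphi\,dx$ satisfies $L(t)\geq d\int\varphi^2\,dx=:d_1>0$. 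Multiplying the $v$-inequality by $\varphi$, integrating, and using Green's identity together with $\Delta\varphi=-\lambda_1\varphi$ and $\int_{\partial\Omega}\partial\varphi/\partial n\,d\sigma=-\lambda_1$ yields
\begin{equation*}
L'(t)\geq\lambda_1 D\int_\Omega v^l(y,t)\,dy-\varepsilon(t)\int_\Omega v^q\varphi\,dx.
\end{equation*}

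To handle the damping I use the elementary pointwise bound $v^q\leq 1+v^l$ (valid since $l\geq q$ and $v\geq0$), giving $\int v^q\varphi\,dx\leq 1+\sup\varphi\cdot\int v^l\,dy$. Since $\varepsilon(t)\to0$ by (\ref{Bl0:b}), for $t\geq t_1$ large we have $\varepsilon(t)\sup\varphi\leq\lambda_1 D/2$, so
\begin{equation*}
L'(t)\geq\frac{\lambda_1 D}{2}\int_\Omega v^l\,dy-\varepsilon(t).
\end{equation*}
Jensen's inequality (via $\int v\,dy\geq L/\sup\varphi$) gives $\int v^l\,dy\geq|\Omega|^{1-l}(\sup\varphi)^{-l}L(t)^l$, whence $L'(t)\geq cL(t)^l-\varepsilon(t)$ for a fixed $c>0$. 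Enlarging $t_1$ so that $\int_{t_1}^\infty\varepsilon(s)\,ds<d_1/2$, which is possible by (\ref{Bl:Eps}), integration yields $L(t)\geq d_1/2+c\int_{t_1}^t L(s)^l\,ds$, so $L$ dominates the solution of $Y'=cY^l$ with $Y(t_1)=d_1/2>0$ and $l>1$, which blows up in finite time. This contradicts global existence of $u$.

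The delicate step is bounding the reaction term $\varepsilon(t)\int v^q\varphi\,dx$ by a controlled multiple of the boundary-driven gain $\lambda_1 D\int v^l\,dy$; the key is precisely the hypothesis $l\geq q$, which activates the elementary bound $v^q\leq 1+v^l$, combined with (\ref{Bl0:b}) (to make $\varepsilon$ pointwise small enough to absorb a factor of $\sup\varphi$) and (\ref{Bl:Eps}) (to keep the integrated residual strictly less than $L(t_1)$, so the comparison ODE starts above zero).
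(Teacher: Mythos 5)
Your proposal is correct. The global-existence half is essentially the paper's own argument: the same supersolution $\beta\widetilde\varphi(x)\exp(-\widetilde\lambda_1 t)$ built on an enlarged domain with $\max\left(\sigma/(l-1),\,\omega/(r+p-q)\right)<\widetilde\lambda_1<\lambda_1$, with the same smallness conditions on the amplitude, so there is nothing to add there. The blow-up half, however, takes a genuinely different route at the one delicate step, namely beating the absorption term. The paper first upgrades the lower bound of Lemma~\ref{Bl:Est} to a \emph{uniform} interior bound $u(x,t)\geq d_2\exp(-\lambda_1 t)$ in $\Omega$: it passes through the boundary condition and (\ref{Bl0:k2}) to get $u\geq d_1\exp(-\lambda_1 t)$ on $\partial\Omega$, and then compares with a subsolution of the auxiliary problem (\ref{problemV2}). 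This uniform bound is what lets it write $\exp(\lambda_1 l t)u^{l-q}\geq d_2^{l-q}\exp(\lambda_1 q t)$ pointwise, so the boundary-driven gain dominates the damping inside the integrand and $J'\geq d_3 J^{q}$ follows. You bypass the auxiliary problem entirely: the elementary inequality $v^q\leq 1+v^l$ (available precisely because $l\geq q$), combined with $\varepsilon(t)\to 0$ from (\ref{Bl0:b}) and the integrability in (\ref{Bl:Eps}), absorbs the damping at the level of the integrated functional, giving $L'\geq cL^l-\varepsilon(t)$ and blow-up with exponent $l$ rather than $q$. Your version is shorter and needs only the weak consequence $L(t_1)\geq d\int_\Omega\varphi^2\,dx>0$ of Lemma~\ref{Bl:Est}, at the price of invoking the convergence of $\int^\infty\varepsilon(s)\,ds$ once more to keep the comparison ODE initialized above zero; the paper's version costs an extra comparison argument but yields the sharper pointwise bound $u\geq d_2\exp(-\lambda_1 t)$ in $\Omega$ as a by-product. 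Both routes use Green's identity with $\int_{\partial\Omega}\partial\varphi/\partial n\,ds=-\lambda_1$ in the same way, and I see no gap in yours.
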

\begin{proof}
Let $l>1,\;1<q<r+p$ and (\ref{Bl0:k}), (\ref{Bl0:b1}) hold.
We choose $\widetilde{\lambda}_1$ in the following way
$$\max\left(\omega/(r+p-q),\sigma/(l-1)\right)<\widetilde{\lambda}_1<\lambda_1.$$
Let $\widetilde{\Omega}$ be bounded domain in $\mathbb{R}^n$
with smooth boundary such that $\Omega\subset\subset\widetilde{\Omega}$
and $\widetilde{\lambda}_1$ be the first eigenvalue of (\ref{Bl:phi})
in $\widetilde{\Omega}.$ The correspondent eigenfunction $\widetilde\varphi(x)$ is chosen to satisfy that
$\sup_{\widetilde{\Omega}}\widetilde\varphi(x) = 1.$
Obviously, $\inf_{\Omega}\widetilde\varphi(x) > d$ for some $d>0.$
Then $v(x,t)=\beta\exp{(-\widetilde{\lambda}_1t)}\widetilde{\varphi}(x)$ is a supersolution of
(\ref{v:u})--(\ref{v:n}) in $Q_T$ for any $T>0$ if

 $$
 \beta\leq\min\left(\left(\frac{B}{\sup_{\Omega}{\widetilde\varphi^{r-q}(x)}
    \int_\Omega{\widetilde\varphi^p(y)}\,dy}\right)^{\frac{1}{r+p-q}},\,
    \left(\frac{d}{A}\right)^{\frac{1}{l-1}}\right),\; u_0(x)\leq \beta\widetilde\varphi(x).
 $$
By Theorem~\ref{Gl:theorem:comp-prins} there exist global solutions of
(\ref{v:u})--(\ref{v:n}).

 Let $u(x,t)$ be nontrivial global solution of (\ref{v:u})--(\ref{v:n}), $l\geq q>1$
 and (\ref{Bl:u}), (\ref{Bl:Eps}), (\ref{Bl0:b}), (\ref{Bl0:k2}) hold. Then by (\ref{v:g}),
 (\ref{Bl:low}) and (\ref{Bl0:k2}) there exist positive constants $t_1$ and $d_1$ such that
\begin{equation}\label{b1}
        u(x,t) \geq d_1  \exp(-\lambda_1 t),\;x\in\partial\Omega,\;t \geq t_1.
        \end{equation}
Let us consider auxiliary problem
\begin{eqnarray} \label{problemV2}
    \left\{
    \begin{array}{ll}
    v_t = \Delta v - b(x,t) v^q, \; x\in\Omega,\;t>t_2,\\
    v(x,t) = u(x,t),\; x\in\partial\Omega,\;t>t_2,\\
    v(x,t_2) = u (x,t_2),\; x\in\Omega,
    \end{array} \right.
    \end{eqnarray}
where $t_2 \geq t_1.$ Using  (\ref{Bl:u}), (\ref{Bl0:b}),
(\ref{b1}), we check that  $\underline u (x,t) = d_2  \exp(-\lambda_1
t)$  is a subsolution of (\ref{problemV2}) under suitable choice of $t_2$ and $d_2 > 0.$
Comparison principle for (\ref{problemV2}) gives
\begin{equation}\label{b2}
u(x,t) \geq d_2  \exp(-\lambda_1 t),\;x \in \Omega,\;t \geq
t_2.
\end{equation}
Let  $\varphi(x)$ satify (\ref{Bl:phi}) and (\ref{varphi_int}). Multiplying (\ref{v:u}) by $\varphi(x)\exp(\lambda_1t),$
integrating over $\Omega$ and using
$$
\int_{\partial\Omega} \frac{\partial\varphi(x)}{\partial n} \,ds = - \lambda_1,
$$
Green's identity, Jensen's inequality and
(\ref{Gl:J(t)}), (\ref{Gl:varphi_int}), (\ref{Bl:u}), (\ref{Bl0:b})--(\ref{b1}),
(\ref{b2}), we obtain
 $$
	J'(t) \geq \int_{\Omega} \left( \lambda_1 [\sup_\Omega \varphi(x)]^{-1} D\exp[\lambda_1lt] u^{l-q} - \varepsilon(t) \exp[\lambda_1qt]  \right)  u^q  \varphi(x)  dx\geq  d_3 J^q (t)
$$
for some $d_3 >0$ and large values of $t>0.$
Integrating differential inequality, we prove the theorem.
\end{proof}
\begin{remark}
 Theorem~\ref{q>1} does not hold if $\sigma=\lambda_1(l-1)$
in (\ref{Bl0:k}) or $\lambda_1$ is replaced by a smaller value in (\ref{Bl0:k2}).
Furthermore, we can not $\varepsilon (t)$ replace by any positive constant in (\ref{Bl:u}).
Indeed, let $a(x,t) \equiv 0,\,$ $b(x,t) = b\exp[\lambda_1 (q-1)t], \,$ $k(x,y,t) = k \exp{[\lambda_1(l-1)t]},$
 where $b$  and   $k$ are positive constants. Then
$$
\overline u(x,t) = \left\{ \frac{\lambda_1}{b} \right\}^\frac{1}{q-1}  \exp (-\lambda_1 t)
$$
is a supersolution of (\ref{v:u})--(\ref{v:n}) if
$\min (q,l) > 1, \,$
$$
k \leq  \frac{1}{|\Omega|} \left\{ \frac{b}{\lambda_1} \right\}^\frac{l-1}{q-1} \,\,\, \textrm{ and } \,\,\, u_0 (x) \leq \left\{ \frac{\lambda_1}{b} \right\}^\frac{1}{q-1}.
$$
By Theorem~\ref{Gl:theorem:comp-prins} there exist global solutions of
(\ref{v:u})--(\ref{v:n}).
\end{remark}

To prove blow-up of all nontrivial solutions of (\ref{v:u})--(\ref{v:n}) for $\max (r,p) \geq q >1$
we assume that
 \begin{equation}\label{a1}
\underline a(t) = \gamma (t) \exp\left[\lambda_1 (r+p-q) t \right]
\overline b(t),
\end{equation}
 \begin{equation}\label{a2}
\int_{0}^{\infty} \underline a(t) \exp\left[-\lambda_1 (r+p-1) t
\right] \, dt = \infty,
\end{equation}
where $\lim_{t \to \infty} \gamma (t) = \infty,$
$\underline a(t)$ and $\overline b(t)$ are defined in (\ref{Def:a,b}).
\begin{theorem}\label{r,p>q>1}
Let $\max (r,p) \geq q >1$ and (\ref{Bl:u}),
(\ref{Bl:Eps}), (\ref{a1}), (\ref{a2}) hold.
Then any nontrivial solution of~(\ref{v:u})--(\ref{v:n}) blows up in finite time.
\end{theorem}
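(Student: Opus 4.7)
The plan is to argue by contradiction: I suppose $u$ is a nontrivial global solution and derive finite-time blow-up of the functional $J(t)=\exp(\lambda_1 t)\int_\Omega u(x,t)\varphi(x)\,dx$, where $\varphi$ is the normalized principal eigenfunction of (\ref{Bl:phi}), (\ref{varphi_int}); this contradicts globality. The argument follows the template of the second half of the proof of Theorem~\ref{q>1}.

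First, hypotheses (\ref{Bl:u}) and (\ref{Bl:Eps}) let me invoke Lemma~\ref{Bl:Est} to obtain the pointwise lower bound
\begin{equation*}
u(x,t)\ge d\,\varphi(x)\exp(-\lambda_1 t),\quad x\in\Omega,\ t\ge t_0,
\end{equation*}
with $d>0$ independent of $T$. This bound serves two purposes: applied to the nonlocal integral it yields $\int_\Omega u^p(y,t)\,dy\ge K\exp(-\lambda_1 pt)$, and in the case $r\ge q$ it permits the pointwise factorization $u^r\ge u^q(d\varphi(x))^{r-q}\exp[-\lambda_1(r-q)t]$, which extracts a factor of $u^q$ with a nonnegative weight.

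Next, I differentiate $J$ and use Green's identity together with $\partial\varphi/\partial n\le 0$ on $\partial\Omega$ (as in (\ref{Gl:varphi_int})) to obtain
\begin{equation*}
J'(t)\ge\exp(\lambda_1 t)\Bigl[\underline a(t)\int_\Omega u^r\varphi\,dx\int_\Omega u^p(y,t)\,dy-\overline b(t)\int_\Omega u^q\varphi\,dx\Bigr].
\end{equation*}
If $r\ge q$, I substitute the pointwise factorization, rewrite $\underline a(t)\exp[-\lambda_1(r+p-q)t]=\gamma(t)\overline b(t)$ via (\ref{a1}), and use $\gamma(t)\to\infty$ to dominate the $-\overline b u^q$ contribution on an interior subdomain $\{\varphi\ge\delta\}$; Jensen's inequality $\int_\Omega u^q\varphi\,dx\ge J(t)^q\exp(-\lambda_1 qt)$ (valid since $q>1$ and $\int\varphi=1$) then produces $J'(t)\ge c\,\underline a(t)\exp[-\lambda_1(r+p-1)t]J(t)^q$. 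If instead $p\ge q$, I apply Jensen to the nonlocal integral, $\int_\Omega u^p(y,t)\,dy\ge K'J(t)^p\exp(-\lambda_1 pt)$ (since $p>1$), and control the negative term through the H\"older estimate $\int_\Omega u^q\varphi\,dx\le(\int_\Omega u^p\varphi\,dx)^{q/p}$ combined with (\ref{a1}); the goal is the analogous estimate $J'(t)\ge c\,\underline a(t)\exp[-\lambda_1(r+p-1)t]J(t)^p$.

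Finally, (\ref{a2}) yields $\int_0^\infty\underline a(t)\exp[-\lambda_1(r+p-1)t]\,dt=\infty$, so by an elementary ODE comparison (integrating the inequality $(J^{1-s})'\le -(s-1)c\,\underline a(t)\exp[-\lambda_1(r+p-1)t]$ with $s\in\{q,p\}>1$) the function $J$ must blow up at some finite time, contradicting that $J$ is finite on any compact time interval where $u$ remains a classical solution. The principal obstacle I anticipate lies in the case $p\ge q>r$: here $u^q$ cannot be factored from $u^r$ with a nonnegative weight, so the domination of the negative term must come from a Jensen bound on $\int u^p\,dy$ combined with a careful balancing of the $\gamma(t)\to\infty$ growth against the $\exp(\lambda_1 t)$ factor that appears once Hölder has been applied.
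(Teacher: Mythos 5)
Your overall strategy coincides with the paper's: the same functional $J(t)$, Lemma~\ref{Bl:Est} for the lower bound $u\ge d\varphi(x)e^{-\lambda_1 t}$, hypothesis (\ref{a1}) with $\gamma(t)\to\infty$ to neutralize the absorption term, and Jensen plus (\ref{a2}) to force finite-time blow-up of $J$. The gap is in how you absorb $-\overline b(t)\int_\Omega u^q\varphi\,dx$. In the case $r\ge q$ you factor $u^r\ge u^q(d\varphi)^{r-q}e^{-\lambda_1(r-q)t}$ and then claim domination ``on an interior subdomain $\{\varphi\ge\delta\}$''. Pointwise domination indeed holds only where $\varphi^{r-q}\gtrsim \gamma(t)^{-1}$, but the negative term is integrated over all of $\Omega$, and nothing in your argument controls $\overline b(t)\int_{\{\varphi<\delta\}}u^q\varphi\,dx$: near $\partial\Omega$ the weight $\varphi^{r-q}$ degenerates while $u$ need not be small there (the boundary datum is nonlocal and $u$ can be large on $\partial\Omega$; the lower bound of Lemma~\ref{Bl:Est} gives no upper control). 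As written this step does not close. The paper sidesteps it by comparing integral rather than pointwise quantities: since $\int_\Omega\varphi\,dx=1$, H\"older gives $\int_\Omega u^q\varphi\,dx\le\bigl(\int_\Omega u^{s}\varphi\,dx\bigr)^{q/s}$ with $s=\max(r,p)\ge q$, and the leftover factor $\bigl(\int_\Omega u^{s}\varphi\,dx\bigr)^{(s-q)/s}$ is bounded below using $u\ge d\varphi e^{-\lambda_1 t}$ on all of $\Omega$, so that after inserting (\ref{a1}) every exponential cancels and $\gamma(t)\to\infty$ makes the whole bracket nonnegative for large $t$ (this is the computation leading to (\ref{I1})).

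For the case $p\ge q$ (in particular $p\ge q>r$), which you rightly identify as the delicate one, you name the correct tools --- H\"older on the negative term, Jensen on the nonlocal integral --- but stop at ``the goal is the analogous estimate''; the balancing you worry about is exactly where the proof lives, and it does work out. The point is that Lemma~\ref{Bl:Est} supplies the sharp rate $e^{-\lambda_1 t}$: one uses $u^r\ge d^r\varphi^r e^{-\lambda_1 rt}$ (valid for every $r>0$, no factorization through $u^q$ needed) together with $\bigl(\int_\Omega u^p\varphi\,dx\bigr)^{(p-q)/p}\ge d^{p-q}e^{-\lambda_1(p-q)t}\bigl(\int_\Omega\varphi^{p+1}\,dx\bigr)^{(p-q)/p}$, whereupon the factor $e^{\lambda_1(p-q)t}$ produced by (\ref{a1}) cancels exactly and the bracket reduces to $c\,\gamma(t)-1\ge0$ for large $t$. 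Retaining half of the source term and applying Jensen then gives $J'(t)\ge c\,\underline a(t)e^{-\lambda_1(r+p-1)t}J^{\max(r,p)}(t)$ (the exponent that comes out naturally is $\max(r,p)$ rather than the $q$ you state in the $r\ge q$ case; either is $>1$, so your concluding ODE step is fine once the differential inequality is actually established).
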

\begin{proof}
Denote
 \begin{equation}\label{I}
I(t) = \int_{\Omega} \left\{ \frac{1}{2} \underline a(t) u^r (x,t)
\int_{\Omega} u^p (y,t) \, dy -  \overline b(t) u^q (x,t) \right
\} \varphi(x) \, dx,
\end{equation}
where $\varphi(x)$ is defined in (\ref{Bl:phi}) and
(\ref{varphi_int}). Suppose at first that $p \geq q.$
By (\ref{Bl:low}), (\ref{a1}), (\ref{I}) and H{$\ddot o$}lder's inequality it follows
 $$
	I(t) \geq \int_{\Omega} \left\{ \frac{1}{2} \underline a(t) u^r
	 [\sup_\Omega \varphi(x)]^{-1}
	\int_{\Omega} u^p (y,t) \varphi(y) \, dy -  \overline b(t) u^q
	\right\} \varphi(x) \, dx
$$
$$
	\geq \overline b(t) \left[ \int_{\Omega}
	u^p \varphi \, dx  \right]^\frac{q}{p} \left\{ \frac{d^r}{2}
	[\sup_\Omega \varphi(x) ]^{-1} \gamma (t)  \exp\left[\lambda_1
	(p-q) t \right] \int_{\Omega} \varphi^{r+1}\, dx \left[
	\int_{\Omega} u^p  \varphi \, dy
	\right]^\frac{p-q}{p}  - 1 \right\}
$$
 \begin{equation}\label{I1}
	\geq \overline b(t) \left[ \int_{\Omega}
	u^p \varphi \, dx  \right]^\frac{q}{p} \left\{ \frac{d^{r+p-q}}{2}
	[\sup_\Omega \varphi(x)]^{-1}\gamma (t)  \int_{\Omega}
	\varphi^{r+1}\, dx \left[ \int_{\Omega} \varphi^{p+1} \, dy
	\right]^\frac{p-q}{p}  - 1 \right\} \geq 0
\end{equation}
for large values of $t.$ Using H{$\ddot o$}lder's inequality again and
(\ref{Gl:J}), (\ref{Bl:low}), (\ref{I1}), we obtain
$$
	J'(t) \geq\exp (\lambda_1  t ) \int_{\Omega} \left\{ \underline
	a(t) u^r  [\sup_\Omega \varphi(x)]^{-1} \int_{\Omega} u^p
	(y,t) \varphi(y) \, dy - \overline b(t) u^q
	\right\} \varphi(x) \, dx
$$
$$
	\geq\frac{1}{2} [\sup_\Omega \varphi(x)]^{-1} \exp (\lambda_1  t ) \underline
	a(t)  \int_{\Omega} u^r (x,t) \varphi(x) \, dx \int_{\Omega} u^p
	(y,t) \varphi(y) \, dy
$$
 \begin{equation}\label{J1}
	\geq\frac{d^r}{2} [\sup_\Omega \varphi(x)]^{-1} \int_{\Omega} \varphi^{r+1}\, dx  \exp\left[-\lambda_1 (r+p-1) t \right]
	\underline a(t) J^p (t)
\end{equation}
for large values of $t.$  By (\ref{a2}), (\ref{J1}) $J(t)$ blows up in finite time.

Suppose that $r \geq q.$ Arguing as in previous case, we obtain
 \begin{equation}\label{J2}
J'(t) \geq \frac{d^p}{2} \int_{\Omega} \varphi^{p}\, dx
\exp\left[-\lambda_1 (r+p-1) t \right] \underline a(t) J^r (t)
\end{equation}
for large values of $t$ and $J(t)$ blows up in finite time again.
\end{proof}

\begin{remark}
Theorem~\ref{r,p>q>1} does not hold if $\gamma (t)$ is a bounded function in (\ref{a1}).
Indeed, suppose that $p \geq 1,\,$ $r \geq q > 1,\,$ $a(x,t) \equiv \underline
 a(t),\,$ $b(x,t) \equiv \overline b(t)\,$ and $k(x,y,t) \equiv 0.$
Let $\varphi(x)$ be defined in (\ref{Bl:phi}) and
(\ref{varphi_int}). Then $ \overline u(x,t) = \beta \varphi (x) \exp (-\lambda_1 t )$ is a supersolution of
(\ref{v:u})--(\ref{v:n}) for $u_0 (x) \leq \beta \varphi (x),\;x \in \Omega$ and small
 $\beta >0.$ By Theorem~\ref{Gl:theorem:comp-prins} there exist global solutions of
(\ref{v:u})--(\ref{v:n}).
\end{remark}

\begin{remark}
We note here an importance of divergence of the integral in (\ref{a2}) for blow-up
of all nontrivial solutions of (\ref{v:u})--(\ref{v:n}). Suppose that $a(x,t) \equiv \underline
 a(t)\,$ and $k(x,y,t) \equiv 0.$ Let $\varphi(x)$ be the solution of (\ref{Bl:phi})
 with $\sup_{\Omega}{\varphi(x)} = 1$ and
 $f(t)$ be a solution of the following differential equation
 $$
 f'(t) + \lambda_1 f(t) - |\Omega| \underline a(t) f^{r+p} (t) = 0.
 $$
Since the integral in (\ref{a2}) converges, $f(t)$ exists for any $t \geq 0$
if $f(0)$ is small enough. Then
$\overline u(x,t) = \varphi (x) f(t)$ is a supersolution of
(\ref{v:u})--(\ref{v:n}) with $u_0 (x) \leq \varphi (x)f(0).$
By Theorem~\ref{Gl:theorem:comp-prins} there exist global solutions of
(\ref{v:u})--(\ref{v:n}).
\end{remark}

\begin{remark}
Theorem~\ref{r,p>q>1} is not true if $\lambda_1$ is replaced by a larger value in  (\ref{Bl:u}).
Indeed, let $\sigma>0.$ We put
$$
a(x,t) = \frac{\lambda_1 (q-1) + \sigma}{(q-1)|\Omega|} \exp
\left[ \left( r+p-1 \right) \left( \lambda_1 + \frac{\sigma}{q-1}
\right) t\right],
$$
$$
b(x,t) = 2 \left( \lambda_1 + \frac{\sigma}{q-1} \right)
\exp\{[\lambda_1 (q-1) + \sigma] t \}, \;
$$
$$
k(x,y,t) = \frac{1}{|\Omega|} \exp \left[ ( l-1 ) \left( \lambda_1
+ \frac{\sigma}{q-1} \right) t\right].
$$

It is easy to see that
$$
 u(x,t) = \exp \left[ - \left( \lambda_1
+ \frac{\sigma}{q-1} \right) t\right]
$$
is the solution of~(\ref{v:u})--(\ref{v:n}) with
$u_0 (x) \equiv 1,\;x \in \Omega.$
\end{remark}

\begin{remark}
From Theorem~\ref{r,p>q>1} it follows that
Theorem~\ref{q>1} does not hold for $\omega > \lambda_1(r+p-q)$ in (\ref{Bl0:b1}).
\end{remark}


\end{document}